\documentclass[11pt,a4paper]{article}

\usepackage{lipsum} 
\usepackage{subfiles}
\usepackage{mathtools} 
\usepackage{amsfonts} 
\usepackage{enumitem} 
\usepackage{graphicx} 
\usepackage{epstopdf}

\DeclarePairedDelimiterX{\norm}[1]{\lVert}{\rVert}{#1}
\DeclarePairedDelimiterX{\inner}[2]{\langle}{\rangle}{\,#1,\,#2}
\DeclarePairedDelimiterX{\abs}[1]{\lvert}{\rvert}{#1}


\usepackage{geometry} 
\usepackage{authblk} 
\usepackage{amsmath} 
\usepackage{amsthm} 
\usepackage[pdfencoding=unicode,
            psdextra,
            breaklinks=true,
            colorlinks,
            linkcolor=blue]{hyperref} 
\usepackage[dvipsnames]{xcolor} 
\usepackage[T1]{fontenc} 
\usepackage[swedish,english]{babel} 
\usepackage{csquotes} 


\usepackage{silence}
\WarningFilter{latex}{Empty bibliography}
\usepackage[backend=biber, style=numeric, maxnames=6]{biblatex}
\renewbibmacro{in:}{}
\addbibresource{bibtex.bib}
\AtEveryBibitem{%
	\clearfield{isbn}
	\clearfield{issn}
	\clearfield{number}
	\clearfield{volume}
	\clearfield{series}
	\clearfield{url}
	\clearfield{pages}
}

\DeclareFieldFormat*{title}{\mkbibemph{#1}}
\DeclareFieldFormat*{journaltitle}{#1}
\DeclareFieldFormat*{booktitle}{#1}

\renewbibmacro{in:}{%
  \ifentrytype{article}
    {}
    {\printtext{\bibstring{in}\intitlepunct}}}

\DeclareFieldFormat[article,periodical]{volume}{\mkbibbold{#1}}
\DeclareFieldFormat[article,periodical]{number}{\bibstring{number}~#1}
\renewbibmacro*{journal+issuetitle}{%
  \usebibmacro{journal}%
  \setunit*{\addspace}%
  \iffieldundef{series}
    {}
    {\newunit
     \printfield{series}%
     \setunit{\addspace}}%
  \printfield{volume}%
  \setunit{\addspace}%
  \usebibmacro{issue+date}%
  \setunit{\addcolon\space}%
  \usebibmacro{issue}%
  \setunit{\addcomma\space}%
  \printfield{number}%
  \setunit{\addcomma\space}%
  \printfield{eid}%
  \newunit}

\DeclareFieldFormat[article,periodical]{pages}{#1}

\DeclareFieldFormat{mrnumber}{%
  MR\addcolon\space
  \ifhyperref
    {\href{http://www.ams.org/mathscinet-getitem?mr=#1}{\nolinkurl{#1}}}
    {\nolinkurl{#1}}}

\renewbibmacro*{doi+eprint+url}{%
  \iftoggle{bbx:doi}
    {\printfield{doi}}
    {}%
  \newunit\newblock
  \printfield{mrnumber}%
  \newunit\newblock
  \iftoggle{bbx:eprint}
    {\usebibmacro{eprint}}
    {}%
  \newunit\newblock
  \iftoggle{bbx:url}
    {\usebibmacro{url+urldate}}
    {}}

\newtheorem{theorem}{Theorem}[section]

\newtheorem{lemma}[theorem]{Lemma}

\theoremstyle{definition}
\newtheorem{definition}[theorem]{Definition}

\newtheorem{remark}[theorem]{Remark}

\numberwithin{equation}{section}

\DeclarePairedDelimiterX{\seminorm}[1]{[}{]}{#1}

\setcounter{biburlnumpenalty}{100}  
\begin{document}

\title{Corrosion detection by identification of a nonlinear Robin boundary condition} 

\author{David Johansson}
\affil{Department of Mathematics, Aarhus University\\\texttt{johansson@math.au.dk}}

\date{January 8, 2026}
\maketitle

\begin{abstract}
We study an inverse boundary value problem in corrosion detection.
The model is based on a conductivity equation with nonlinear Robin boundary condition.
We prove that the nonlinear Robin term can be identified locally from Cauchy data measurements on a subset of the boundary.
A possible strategy for turning a local identification result into a global one is suggested, and a partial result is proved in this direction.
The inversion method is an adaptation to this nonlinear Robin problem of a method originally developed for semilinear elliptic equations.
The strategy is based on linearization and relies on parametrizing solutions of the nonlinear equation on solutions of the linearized equation.\\
\texttt{MSC 2020:} 35R30(Primary), 35J25, 35J65\\
\texttt{Keywords:} Corrosion detection, Inverse Robin problem, Nonlinear Robin boundary condition
\end{abstract}

\section{Introduction}
We investigate the identifiability of corrosion on an object from voltage and current measurements on the uncorroded part of the object.
The mathematical model is based on a differential equation for the electric potential in the object.
If there is corrosion on a subset of the boundary, this appears as a nonlinear Robin boundary condition.
The inverse Robin problem was introduced in \cite{inglese97} as a tool for corrosion detection, with a linear Robin condition.
In the present article we study the identifiability of corrosion using a nonlinear Robin condition motivated by modeling done in \cite{vx98}.
The Neumann-Robin model we use here corresponds to an experimental setup where a current is prescribed, and the induced voltage is measured.
One could also prescribe the voltage and measure the current and this would lead to a mixed Dirichlet-Robin problem.
In order to simplify the analysis a bit, we only consider the Neumann-Robin problem.
Similar results should hold also for the Dirichlet-Robin problem.

Let \( \Omega\subset\mathbb{R}^{n} \) be an open set representing the possibly corroded object.
The boundary \( \partial\Omega \) is partitioned into an \emph{accessible} region \( \Gamma_{A} \) and an \emph{inaccessible} region \( \Gamma_{I} \).
A current \( f \) is prescribed on \( \Gamma_{A} \) and any corrosion occurs on \( \Gamma_{I} \).
The conductivitity thoughout \( \Omega \) is denoted by \( \gamma \).
For these quantities, we use the following running assumptions
\begin{enumerate}[label=(\roman*)]
	\item\label{assumption-1} \( \Omega\subset \mathbb{R}^{n} \) is a bounded connected open set with \( C^{1} \) boundary \( \partial\Omega \)
	\item\label{assumption-2} \( \partial\Omega = \overline{\Gamma_{I}\cup \Gamma_{A}} \) for \( \Gamma_{I}, \Gamma_{A}\subset \partial\Omega \) relatively open and nonempty with \( \Gamma_{I}\cap \Gamma_{A} = \emptyset \)
	\item\label{assumption-3} \( \gamma\in C^{0,1}(\overline{\Omega}; \mathbb{R}^{n\times n}) \) is symmetric with eigenvalues uniformly bounded below on \( \overline{\Omega} \), that is, there exists \( C>0 \) such that \( \xi^{t} \gamma(x)\xi\geq C \abs{\xi}^{2} \) for all \( x\in\overline{\Omega} \), \( \xi\in\mathbb{R}^{n} \)
\end{enumerate}
The electric potential is denoted by \( u \) and \( \partial_{\nu,\gamma}u \coloneqq (\gamma\nabla u)\cdot \nu \) denotes the conormal derivative on \( \partial\Omega \) where \( \nu \) is the outward pointing unit normal vector.
The governing equations for the electric potential \( u \) are the following,
\begin{equation}\label{eq-nonlinear}
\begin{cases}
	-\operatorname{div}(\gamma\nabla u) = 0&\text{in }\Omega, \\
	\partial_{\nu,\gamma}u = f&\text{on }\Gamma_{A}, \\
	\partial_{\nu,\gamma}u + a(x,u) = 0&\text{on }\Gamma_{I}.
\end{cases}
\end{equation}
We prove that voltage-current pairs \( (u\vert_{\Gamma_{A}}, f) \) on the accessible boundary region \( \Gamma_{A} \) uniquely determine the term \( a(x,u) \) locally on the inaccessible boundary region \( \Gamma_{I} \).
If the function \( a \) is nonzero then there is corrosion and if the function is identically zero then there is no corrosion.
In particular the function \( a \) is allowed to be zero, meaning that we can detect the presence of corrosion as well as the absence of corrosion.

\subsection*{Notation and conventions}
For any real number \( p>1 \) its Hölder conjugate is denoted \( p' \), \( \frac{1}{p} + \frac{1}{p'} = 1 \).
The symbol \( P_{X} \) denotes a projection onto the Banach space \( X \).
The symbol \( C \) denote constants, and may denote several different constants in the same proof whenever the precise value of the constant is insignificant for the proof.

\subsection{Precise problem formulation and main results}\label{sec-problem-def}
The definition of the parameter-to-measurement map is central to many inverse problem, as this is the map we wish to invert.
More specifically, we study the injectivity of this map.
A typical measurement for linear elliptic partial differential equations is the Cauchy data set, which is guaranteed to exist for a large family of coefficients by the Fredholm alternative.
Generalizing this to nonlinear equations turns out to be somewhat delicate.
On the one hand the measurements need to be well-defined, which cannot be settled by the Fredholm alternative, and on the other hand the measurements need to be useful.
In the nonlinear setting, a natural idea is to approach the problem by local arguments based on linearization.
This might suggest that also the measurements should somehow be local in order to be useful.
But for the more established theory of linear equations the Cauchy data is always global, so it is not immediately clear in what sense the Cauchy data should be local.
The goal of this subsection is to make sense of both a local and a global version of the inverse nonlinear Robin problem. 
The section ends with statements of the main results.

First, we define a set of \emph{admissible} nonlinearities for which we solve the inverse problem.
This could be approached in at least two ways.
One could impose structural assumptions to ensure the equation is well-posed and then solve the inverse problem in this subset of nonlinearities.
Such structural assumptions could be the positivity of the derivative \( \partial_{z}a(x,z)>c\geq 0 \), as in \cite{is94}.
This is not the approach taken in the present article.
Instead, we try to make minimal assumptions by simply assuming that the nonlinearity \( a(x,z) \) must allow the equation to have at least one solution.
\begin{definition}[Admissible nonlinearity]
Given a fixed \( p>n \) and its Hölder conjugate \( p' \), any function \( a\in C_{loc}^{1,1}(\mathbb{R}, L^{\infty}(\Gamma_{I})) \) is said to be an \emph{admissible nonlinearity} if and only if there exist \( f\in W^{1-1/p',p'}(\partial\Omega)^{*} \) with \( \operatorname{supp}(f)\subseteq \Gamma_{A} \) and \( u\in W^{1,p}(\Omega) \) satisfying \eqref{eq-nonlinear}.
The set of all admissible nonlinearities is denoted \( \mathcal{A}_{ad}\subseteq C_{loc}^{1,1}(\mathbb{R},L^{\infty}(\Gamma_{I})) \).
\end{definition}
\begin{remark}
	The integrability exponent \( p \) is fixed for the class of admissible nonlinearities \( \mathcal{A}_{ad} \).
\end{remark}
Note that \( \mathcal{A}_{ad} \) is nonempty.
Any function of the form \( a(x,z) = q(x)z^{k} \) for \( k\in\mathbb{N} \) belongs to \( \mathcal{A}_{ad} \) and, more generally, any function \( a(x,z) \) which is locally \( C^{1,1} \) in \( z \) and \( L^{\infty} \) in \( x \) and whose derivative vanish at zero, \( \partial_{z}a(x,0) = 0 \).
For these examples of nonlinearities, \( u(x)=0 \) is a solution.

For some fixed \( p>n \) and any \( a\in\mathcal{A}_{ad} \) there exist solutions in \( W^{1,p}(\Omega) \), meaning that Cauchy data measurements are possible.
In principle the \emph{global} Cauchy data set \( C_{a} \) can be defined as the Cauchy data of all solutions,
\begin{equation*}
\begin{aligned}
	C_{a} \coloneqq \{(u\vert_{\Gamma_{A}},\partial_{\nu,\gamma}u\vert_{\Gamma_{A}})\colon &u\in W^{1,p}(\Omega) \text{ solves \eqref{eq-nonlinear} for some }f\in W^{1-1/p',p'}(\partial\Omega)^{*}\\&\text{with }\operatorname{supp}(f)\subseteq\Gamma_{A} \}.
\end{aligned}
\end{equation*}
For a precise definition of the conormal derivative \( \partial_{\nu,\gamma}u\in W^{1-1/p',p'}(\partial\Omega)^{*} \) and its restriction \( \partial_{\nu,\gamma}u\vert_{\Gamma_{A}}\in W^{1-1/p',p'}_{0}(\partial\Omega,\Gamma_{A})^{*} \), see Lemma \ref{lemma-conormal} and equations \eqref{eq-functional-restriction}, \eqref{eq-traces-space-support} in the appendix.
The global Cauchy data \( C_{a} \) is a natural candidate for the parameter-to-measurement map, as follows:
\begin{equation}\label{eq-parameter-to-measurement-1}
	\mathcal{A}_{ad}\ni a\mapsto C_{a}\subseteq W^{1-1/p,p}(\Gamma_{A})\times W^{1-1/p',p'}_{0}(\partial\Omega,\Gamma_{A})^{*}.
\end{equation}
We may then consider the inverse problem of determining the injectivity of this map:
\begin{center}
\begin{minipage}{0.8\textwidth}
\begin{itemize}
	\item[{\bfseries IP1:}]\label{ip1-1} For \( a_{1},a_{2} \in \mathcal{A}_{ad} \), it is true that \( C_{a_{1}}=C_{a_{2}}\implies a_{1}=a_{2} \)?
\end{itemize}
\end{minipage}
\end{center}
If this map is injective then the Cauchy data set \( C_{a} \) determines \( a \) in \( \Gamma_{I}\times\mathbb{R} \).
In other words, injectivity of the map \eqref{eq-parameter-to-measurement-1} leads to global identification of \( a \).

The technique from \cite{jns23} that we use to solve the inverse problems is local in nature, and uses a form of local Cauchy data set.
The \emph{local Cauchy data} set \( C_{a}^{w,\delta} \) is defined to be the Cauchy data which is contained in a \( \delta \)-ball around the Cauchy data of some solution \( w \), as follows
\begin{equation*}
\begin{aligned}
	C_{a}^{w,\delta} \coloneqq \{(u\vert_{\Gamma_{A}},\partial_{\nu,\gamma}u\vert_{\Gamma_{A}})\colon &u\in W^{1,p}(\Omega) \text{ solves \eqref{eq-nonlinear} for some }f\in W^{1-1/p',p'}(\partial\Omega)^{*} \\
	& \text{with }\operatorname{supp}(f)\subseteq\Gamma_{A}\text{, }\norm{u\vert_{\Gamma_{A}}-w\vert_{\Gamma_{A}}}_{W^{1,p}(\Gamma_{A})}<\delta, \\
	&\text{ and }\norm{\partial_{\nu,\gamma}u\vert_{\Gamma_{A}}-\partial_{\nu,\gamma}w\vert_{\Gamma_{A}}}_{W^{1-1/p',p'}(\partial\Omega,\Gamma_{A})^{*}}<\delta\}.
\end{aligned}
\end{equation*}
Whenever \( a\in\mathcal{A}_{ad} \) then there exists at least one solution \( w \) and Lemma \ref{lemma-solution-map} implies that there exists a neighborhood of solutions around \( w \) whose Cauchy data belong to \( C_{a}^{w,\delta} \).
In other words, \( C_{a}^{w,\delta} \) contains measurements of infinitely many solutions.
\begin{remark}
This definition of local Cauchy data is somewhat different than what is used other works on elliptic equations of semilinear type, such as \cite{jns23}.
In the present article, the \( \delta \)-neighborhood in the definition of \( C_{a}^{w,\delta} \) is defined with norms on the boundary \( \partial\Omega \), while in \cite{jns23} it is defined with a norm in the interior of \( \Omega \).
It is desirable to use norms on the boundary \( \partial\Omega \) since we wish to make boundary measurements.
But the \( \delta \)-neighborhood is mainly used to ensure that the Cauchy data of the solutions produced by the implicit function theorem belongs to \( C_{a}^{w,\delta} \).
For the equation treated in \cite{jns23} this is more difficult to do.
Hence the use of different norms.
\end{remark}
The local version of \hyperref[ip1-1]{IP1} is formulated as follows:
\begin{center}
\begin{minipage}{0.8\textwidth}
\begin{itemize}
	\item[{\bfseries IP2:}]\label{ip2-1} For \( a_{1},a_{2}\in \mathcal{A}_{ad} \) is it true that for some \( \delta>0 \)
\begin{equation*}
	C_{a_{1}}^{w_{1},\delta} = C_{a_{2}}^{w_{2},\delta}\implies a_{1}=a_{2}\text{ locally},
\end{equation*}
where \( w_{1},w_{2} \) are solutions for \( a_{1},a_{2} \), respectively, with equal Cauchy data?
\end{itemize}
\end{minipage}
\end{center}
The first main result of the present article is to adapt the method developed in \cite{jns23} to the nonlinear Robin problem.
This provides an answer in the affirmative to \hyperref[ip2-1]{IP2}.
\begin{remark}
The local problem \hyperref[ip2-1]{IP2} does not correspond to injectivity of a map of the form \( a\mapsto C_{a}^{w,\delta} \), since injectivity is equivalent to global identification.
For the identification stated in \hyperref[ip2-1]{IP2} to be global we would need \( a\in\mathcal{A}_{ad} \) to only be defined on the subset on which the local identification holds, but this subset is a priori not known.
\end{remark}
The second main result is related to the global inverse problem \hyperref[ip1-1]{IP1}.
This part of the article adds no new technical results, and is mainly intended to highlight the role of the solutions of the equation when determining nonlinear terms.
In particular it shows that insights about the the behaviour of solutions may lead to global determination and therefore a solution of \hyperref[ip1-1]{IP1}.

The key observation is that the method from \cite{jns23} used to solve \hyperref[ip2-1]{IP2} essentially identifies \( a_{1}(x,z) = a_{2}(x,z) \) at points \( x,z \) such that \( z = u(x) \) for some solution \( u \) with Cauchy data in \( C_{a_{1}}^{w_{1},\delta} \).
In other words, the nonlinearities are identified on the graphs of solutions having Cauchy data in the local Cauchy data set.
If the global condition \( C_{a_{1}}=C_{a_{2}} \) implies the local Cauchy data condition \( C_{a_{1}}^{w_{1},\delta}= C_{a_{2}}^{w_{2},\delta} \) for arbitrary \( w_{1},w_{2} \) that have equal Cauchy data, then the same method from \cite{jns23} identifies the nonlinearities on the union of the graphs of all solutions.
The union of all such graphs is the so-called \emph{reachable set},
\begin{equation*}
	E_{a}\coloneqq \{(x,u(x))\in\Gamma_{I}\times\mathbb{R}\colon \text{for any solution } u \text{ with Cauchy data in } C_{a}\}.
\end{equation*}
If one can characterize the reachable set \( E_{a} \) and in particular show that \( E_{a}=\Gamma_{I}\times\mathbb{R} \) then one would prove that \( a_{1}=a_{2} \) on \( \Gamma_{I}\times\mathbb{R} \).
We may therefore solve \hyperref[ip1-1]{IP1} by solving the following two problems,
\begin{center}
\begin{minipage}{0.8\textwidth}
\begin{itemize}
	\item[{\bfseries IP3:}]\label{ip3-1} Does the Cauchy data \( C_{a} \) uniquely determine the nonlinearity \( a\in\mathcal{A}_{ad} \) on the reachable set \( E_{a} \)?
	\item[{\bfseries IP4:}]\label{ip4-1} Characterize the reachable set \( E_{a} \).
\end{itemize}
\end{minipage}
\end{center}
The second main result is a partial result on \hyperref[ip4-1]{IP4}.
We show that \( E_{a} \) is an open subset of \( \Gamma_{I}\times\mathbb{R} \), using a minor modification of the Runge approximation argument from \cite{jns23}.

The following theorem provides an answer in the affirmative to \hyperref[ip2-1]{IP2}.
\begin{theorem}\label{thm-main-result-1}
Let \( n\geq 2 \) and \( \Omega,\partial\Omega,\Gamma_{A},\Gamma_{I} \) satisfy \ref{assumption-1} and \ref{assumption-2}.
Assume that \( \gamma \) satisfies \ref{assumption-3} and \( a_{1},a_{2}\in\mathcal{A}_{ad} \).
If there exists solutions \( w_{1},w_{2} \) corresponding to \( a_{1},a_{2} \) with equal Cauchy data on \( \Gamma_{A} \) and constants \( \delta,C>0 \) such that
\begin{equation*}
	C_{a_{1}}^{w_{1},\delta} = C_{a_{2}}^{w_{2},\delta}
\end{equation*}
then \( w_{1}=w_{2} \) on \( \Gamma_{I} \) and it holds, for some \( \lambda>0 \), that
\begin{equation*}
	a_{1}(x,w_{1}(x)+\varepsilon) = a_{2}(x,w_{1}(x)+\varepsilon)\quad \text{for a.e. } x\in\Gamma_{I},\,\,\abs{\varepsilon}\leq\lambda.
\end{equation*}
\end{theorem}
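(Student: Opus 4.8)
The plan is to combine unique continuation for the linear conductivity equation with the parametrization of nearby solutions supplied by Lemma~\ref{lemma-solution-map}, transferring information from the linearized Robin problem to the nonlinear one by Runge approximation, in the spirit of \cite{jns23}, \cite{cr21}, \cite{fi06}. First I would prove $w_1=w_2$: since $w_1,w_2$ have the same Cauchy data on $\Gamma_A$ and both are $\gamma$-harmonic, the difference $v=w_1-w_2$ solves $-\operatorname{div}(\gamma\nabla v)=0$ in $\Omega$ with $v=\partial_{\nu,\gamma}v=0$ on the nonempty relatively open set $\Gamma_A$; unique continuation from Cauchy data (valid because $\gamma$ is Lipschitz and $\Omega$ is connected) forces $v\equiv 0$, so $w_1=w_2$ in $\Omega$, and since $p>n$ the embedding $W^{1,p}(\Omega)\hookrightarrow C(\overline\Omega)$ upgrades this to $w_1=w_2=:w$ on $\overline\Omega$. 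Substituting $w$ into the two Robin conditions on $\Gamma_I$ gives $a_1(\cdot,w)=-\partial_{\nu,\gamma}w=a_2(\cdot,w)$ in $L^\infty(\Gamma_I)$, which is the case $\varepsilon=0$ of the claimed identity.

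Next I would build a family of solutions of the $a_1$-problem near $w$. By Lemma~\ref{lemma-solution-map} with $w$ as reference solution, to every sufficiently small solution $v$ of the Robin problem linearized at $w$ — that is, $-\operatorname{div}(\gamma\nabla v)=0$ in $\Omega$, arbitrary Neumann datum on $\Gamma_A$, and $\partial_{\nu,\gamma}v+\partial_z a_1(x,w)\,v=0$ on $\Gamma_I$ — there corresponds a solution $u_v=w+v+r_v$ of \eqref{eq-nonlinear} with $\|r_v\|_{W^{1,p}(\Omega)}\leq C\|v\|_{W^{1,p}(\Omega)}^2$. For $\|v\|$ small, $\|u_v-w\|_{W^{1,p}(\Omega)}<\delta_1$, so the Cauchy data of $u_v$ on $\Gamma_A$ lies in $C_{a_1}^{w,\delta_1}=C_{a_2}^{w,\delta_2}$; hence there is a solution of the $a_2$-problem with the same Cauchy data on $\Gamma_A$ and within $\delta_2$ of $w$. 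Running the unique continuation argument of the first step again, this solution equals $u_v$ in $\Omega$, and comparing its Robin condition with that of $u_v$ yields $a_1(x,u_v(x))=a_2(x,u_v(x))$ for a.e.\ $x\in\Gamma_I$, for every admissible $v$.

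It remains to let the traces $u_v|_{\Gamma_I}$ sweep out a tube around the graph of $w$. By Runge approximation for the linearized Robin problem — a duality–Fredholm argument together with unique continuation, using symmetry of the bilinear form attached to the Robin condition, as in \cite{cr21} — the traces $\{v|_{\Gamma_I}\}$ of linearized solutions are dense in $C(\overline{\Gamma_I})$; choose $v_*$ with $v_*|_{\Gamma_I}$ within $1/2$ of the constant function $1$, so $v_*\geq 1/2$ on $\overline{\Gamma_I}$. For $\tau\in\mathbb{R}$ small, $\tau v_*$ is again a linearized solution and $u_{\tau v_*}(x)-w(x)=\tau v_*(x)+r_{\tau v_*}(x)$ with $\|r_{\tau v_*}\|_{C(\overline\Omega)}\leq C'\tau^2$ uniformly; the intermediate value theorem then produces a constant $\lambda>0$ such that for every $x\in\Gamma_I$ and every $|s|\leq\lambda$ there is a small $\tau=\tau(x,s)$ with $u_{\tau v_*}(x)=w(x)+s$. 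Finally, fix a countable set $\{\tau_j\}$ dense in the admissible range of scalars; there is a single null set $N\subset\Gamma_I$ off which $a_1(x,u_{\tau_j v_*}(x))=a_2(x,u_{\tau_j v_*}(x))$ for all $j$, and for $x\notin N$ the continuity of $\tau\mapsto u_{\tau v_*}(x)$ makes $\{u_{\tau_j v_*}(x)\}_j$ dense in $[w(x)-\lambda,w(x)+\lambda]$, so continuity of $z\mapsto(a_1-a_2)(x,z)$ forces $a_1(x,w(x)+\varepsilon)=a_2(x,w(x)+\varepsilon)$ for all $|\varepsilon|\leq\lambda$ and a.e.\ $x\in\Gamma_I$.

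The step I expect to be the main obstacle is the Runge approximation for the linearized Robin problem. Because $\partial_z a_1(x,w)$ carries no sign, the linearized operator need not be coercive, so the duality argument must be run through Fredholm theory, handling the possibly nontrivial finite-dimensional kernel (for instance via a coercive shift of the bilinear form), and one must check that the density — hence the eventual $\lambda$ — is uniform up to $\partial\Gamma_I$ and holds in a norm strong enough to control traces pointwise. A second, more bookkeeping-type point is to confirm that Lemma~\ref{lemma-solution-map} really delivers the parametrization $u_v=w+v+r_v$ with the quadratic remainder bound used above, and that all traces and conormal derivatives are taken in function spaces compatible with the $W^{1,p}$ and $C^{1,1}_{loc}$ regularity at hand.
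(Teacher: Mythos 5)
Your proposal is correct and follows essentially the same route as the paper's proof: parametrize nearby solutions of the nonlinear problem via Lemma~\ref{lemma-solution-map}, transfer equality of Cauchy data to equality of solutions by unique continuation (Lemma~\ref{lemma-ucp}), deduce $a_1=a_2$ on solution graphs, then use Runge approximation (Theorem~\ref{theorem-runge-w1p}) to produce a linearized solution close to $1$ on $\Gamma_I$, and scale it together with an intermediate-value argument exploiting the quadratic bound $\|Q(v)\|\leq C\|v\|^2$ to sweep out a $\lambda$-tube around the graph of $w_1$. Two small differences: you prove $w_1=w_2$ up front directly from the hypothesis of equal Cauchy data, whereas the paper obtains it implicitly as the $v=0$ member of the family; and your final countable-density step to pass from ``a.e.\ $x$ for each fixed scalar'' to ``a.e.\ $x$, all $|\varepsilon|\leq\lambda$'' is actually a more careful treatment of the null-set bookkeeping than the paper's, which moves directly from the a.e.\ identity \eqref{eq-abstract-identification-1} to the pointwise conclusion without comment. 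The Runge step you flag as a possible obstacle is exactly what Theorem~\ref{theorem-runge-w1p} supplies, via the duality-plus-Fredholm argument you anticipate.
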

The following theorem resolves \hyperref[ip3-1]{IP3} and provides a partial result for \hyperref[ip4-1]{IP4}.
\begin{theorem}\label{thm-main-result-2}
Let \( n\geq 2 \) and \( \Omega,\partial\Omega,\Gamma_{A},\Gamma_{I} \) satisfy \ref{assumption-1} and \ref{assumption-2}.
Assume that \( \gamma \) satisfies \ref{assumption-3} and \( a_{1},a_{2}\in\mathcal{A}_{ad} \).
Then
\begin{equation*}
	C_{a_{1}} = C_{a_{2}} \implies
\begin{cases}
	a_{1}(x,z) = a_{2}(x,z)\quad \text{for a.e. } (x,z)\in E_{a_{1}},\\
	E_{a_{1}} = E_{a_{2}} \text{ is an open subset of }\Gamma_{I}\times\mathbb{R}.
\end{cases}
\end{equation*}
\end{theorem}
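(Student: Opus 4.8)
The plan is to combine unique continuation from the Cauchy data on $\Gamma_A$ with the local identification Theorem \ref{thm-main-result-1}, and then to extract the openness of $E_{a_1}$ from the mechanism inside the proof of that theorem. Throughout I write ``solution of the $a_i$-equation'' for a $W^{1,p}(\Omega)$ solution of \eqref{eq-nonlinear} whose Cauchy data on $\Gamma_A$ lies in $C_{a_i}$, and $G_u\coloneqq\{(x,u(x)):x\in\Gamma_I\}$ for its graph, so that $E_{a_i}=\bigcup_u G_u$ and each $G_u$ is the graph of a continuous function since $p>n$.

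First I would pair up solutions. Given a solution $u_1$ of the $a_1$-equation, the hypothesis $C_{a_1}=C_{a_2}$ produces a solution $u_2$ of the $a_2$-equation with the same Cauchy data on $\Gamma_A$; since $u_1$ and $u_2$ solve the same interior equation in the connected set $\Omega$, share Cauchy data on the nonempty relatively open set $\Gamma_A$, and $\gamma$ is Lipschitz, unique continuation forces $u_1=u_2$ on $\overline{\Omega}$. Comparing the Robin conditions on $\Gamma_I$ then gives $a_1(x,u_1(x))=-\partial_{\nu,\gamma}u_1(x)=-\partial_{\nu,\gamma}u_2(x)=a_2(x,u_1(x))$ for a.e.\ $x\in\Gamma_I$, i.e.\ $a_1=a_2$ on $G_{u_1}=G_{u_2}\subseteq E_{a_2}$. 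As this holds for every $u_1$, $E_{a_1}\subseteq E_{a_2}$, and by symmetry $E_{a_1}=E_{a_2}$.

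Next I would upgrade ``$a_1=a_2$ on the graph $G_{u_1}$'' to ``$a_1=a_2$ on a tube around $G_{u_1}$'' using Theorem \ref{thm-main-result-1} with $w_1=u_1$, $w_2=u_2$. For this I need its hypothesis $C_{a_1}^{u_1,\delta_1}=C_{a_2}^{u_2,\delta_2}$. The proof of Theorem \ref{thm-main-result-1} only uses nearby solutions within a fixed radius, so it remains valid when the radii from Lemma \ref{lemma-solution-map} are replaced by any common smaller $\delta$; and for such $\delta$ the identity holds, because if $(g,h)\in C_{a_1}^{u_1,\delta}$ is the Cauchy data of a solution $\tilde u_1$ with $\norm{\tilde u_1-u_1}_{W^{1,p}(\Omega)}<\delta$, then $(g,h)\in C_{a_1}=C_{a_2}$ is also the Cauchy data of some solution $\tilde u_2$ of the $a_2$-equation, unique continuation gives $\tilde u_1=\tilde u_2$ in $\Omega$, and $u_1=u_2$ yields $\norm{\tilde u_2-u_2}_{W^{1,p}(\Omega)}<\delta$, so $(g,h)\in C_{a_2}^{u_2,\delta}$; the reverse inclusion is symmetric. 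Theorem \ref{thm-main-result-1} then supplies $\lambda=\lambda_{u_1}>0$ with $a_1=a_2$ on the tube $\{(x,z):x\in\Gamma_I,\ \abs{z-u_1(x)}\le\lambda\}$, whose interior is open and contains $G_{u_1}$. Taking the union over all solutions $u_1$ of the $a_1$-equation gives $a_1=a_2$ on an open set containing $E_{a_1}$; in particular $a_1=a_2$ a.e.\ on $E_{a_1}$ (the ``a.e.'' is only needed because $a_i(\cdot,z)\in L^\infty(\partial\Omega)$).

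The remaining, and hardest, point is that $E_{a_1}$ itself is open — the open set built above may be strictly larger. Here I would use the construction in the proof of Theorem \ref{thm-main-result-1}, not just its statement: that proof produces, for each solution $w$, a family of solutions with Cauchy data in $C_a^{w,\delta}$ whose graphs cover an open neighbourhood of $G_w$; by \eqref{eq-local-to-global} these graphs lie in $E_a$, so $E_a$ contains an open neighbourhood of each $G_w$, and hence $E_a=\bigcup_w G_w$ is open. The real content is that these detecting solutions fill a neighbourhood of $G_w$: this reduces to showing that at every $x_0\in\Gamma_I$ the linearized Robin problem at $w$ (homogeneous Robin on $\Gamma_I$, free Neumann data on $\Gamma_A$) has a solution not vanishing at $x_0$, which one gets from unique continuation applied to the Green's function of the linearized operator, together with an implicit-function and intermediate-value argument to sweep out the neighbourhood. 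I expect this to be the main obstacle: if the proof of Theorem \ref{thm-main-result-1} already isolates this ``filling'' statement as a lemma, the present argument is short, but otherwise that part of the argument must be reproduced here. A minor point is the bookkeeping with the radii $\delta_i$ in the previous step, which should be stated carefully since Theorem \ref{thm-main-result-1} is phrased with the specific radii from Lemma \ref{lemma-solution-map}.
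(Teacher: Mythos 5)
Your proposal is essentially the paper's proof: pair solutions via $C_{a_1}=C_{a_2}$, use unique continuation to force $u_1=u_2$, read off $a_1=a_2$ on graphs and hence on $E_{a_1}=E_{a_2}$, and then obtain openness from the tube produced in the proof of Theorem~\ref{thm-main-result-1}. The ``filling'' step you worry about is already established there, and by a different (cleaner) mechanism than the Green's-function argument you sketch: Runge approximation (Theorem~\ref{theorem-runge-w1p}) yields a \emph{single} linearized solution $v_\theta$ with $v_\theta\geq 1-\theta>0$ on all of $\Gamma_I$, and an intermediate-value argument in $t$ applied to $u_{1,tv_\theta}=w_1+tv_\theta+Q(tv_\theta)$ then shows that for every $x\in\Gamma_I$ and every $\abs{\varepsilon}\leq\lambda$ there is a solution through $(x,w_1(x)+\varepsilon)$, giving the uniform tube $\{(x,z):x\in\Gamma_I,\ \abs{z-w(x)}<\lambda\}\subseteq E_{a_1}$ directly. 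So you do not need a per-point construction nor an implicit-function argument; once the tube is in hand, the openness follows exactly as you say (the paper spells this out by taking a product neighbourhood $I\times\mathcal{U}$ around a given $(x',z')$ using continuity of $w$). Your explicit verification that $C_{a_1}^{u_1,\delta}=C_{a_2}^{u_2,\delta}$ is a reasonable way to invoke Theorem~\ref{thm-main-result-1} as a black box; the paper instead simply reruns that proof under the stronger hypothesis $C_{a_1}=C_{a_2}$, which sidesteps the bookkeeping with the radii.
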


The article is organized as follows.
In Section \ref{sec-solution-map} we prove the solvability results for the nonlinear equation, which are used both to justify the definition of the local Cauchy data set and to solve the inverse problem.
In Section \ref{sec-inverse-problem} we prove the main results, Theorem \ref{thm-main-result-1} and Theorem \ref{thm-main-result-2}.
In Appendix \ref{sec-appendix}, we establish for the sake of completeness the necessary theory for weak solutions of the linear equation.

\subsection{Historical overview}
Some of the earliest attempts at studying corrosion detection in the framework of inverse problems appear in \cite{ksv96} and \cite{ks95}.
Corrosion leads to a loss of mass, which in turn leads to a change of shape.
Starting with a rectangular object with known dimensions they determine where the shape deviates from a ractangle, thus identifying the corrosion.
A new model is proposed in \cite{vx98}, where the object has a known fixed shape and the presence of corrosion is modelled by boundary conditions.
If the corrosion is electrochemical, an electric current will flow through the corroded area.
Arguing based on Faraday's law and electrochemical principles, the authors conclude that this current is dependent on the electric potential in a nonlinear way.
The result is a nonlinear Robin boundary condition for the electric potential.
This is said to apply to electrochemical corrosion, which the authors of \cite{vx98} claim covers most corrosion.

The formulation of an inverse Robin problem for corrosion detection was first given in \cite{inglese97} and the first identification result was proved in \cite{cj99}, where a smooth coefficient in a linear Robin condition was identified from one boundary measurement.
The main ingredient in the proof is unique continuation from Cauchy data.
A very similar proof is applied to a nonlinear Robin condition in \cite{fi06}, where the nonlinearity is real-analytic in the solution and independent of space.
The result is limited to a 2-dimensional square, but uses only a single boundary measurement.
The identifiability of an \( L^{\infty} \) coefficient from a single measurement in the linear Robin problem was studied in \cite{bbl16}.
They show that the coefficient is identifiable in the 2-dimensional case and provide a counterexample in 3 dimensions.
It was shown in \cite{cr21}, however, that the Dirichlet-to-Neumann map uniquely determines an \( L^{\infty} \) coefficient in the linear Robin condition.
This was proved using an integral identity and density of solutions, similar to the classical Calderón problem, but with the density coming from Runge approximation rather than specialized families of solutions like complex geometric optics solutions.

The Cafarelli-Silvestre extension provides a connection between the linear Robin problem in a flat domain and the fractional Schrödinger equation on the boundary.
The inverse linear Robin problem is therefore related to inverse problems for the fractional Schrödinger equation, such as the ones studied in \cite{cmr21}.
The strong uniqueness result in \cite{cj99} for the inverse linear Robin problem using a single boundary measurement has counterparts for the nonlocal fractional Schrödinger equation \cite{grsu20}.

While the present article is focused on the question of identifiability, there are several interesting variations of the problem.
For the inverse linear Robin problem with one measurements, logarithmic stability estimates were established in \cite{adpr03} and \cite{cj16}.
Similar logarithmic stability results are proved in \cite{bcd16} for an inverse Robin-type problem for the stationary Navier-Stokes' equations.
With measurements given by the Neumann-to-Dirichlet map, Lipschitz stability is established for an inverse transmission-type Robin problem in \cite{hm19}.
In \cite{lf05} the inverse linear Robin problem is reformulated in terms of integral equations on the boundary, and in \cite{bm15} this integral equation approach is used to solve an inverse problem for a transmission-type Robin problem.

In the present article, we use ideas coming from the Calderón problem for a semilinear Schrödinger-type equation to solve the inverse nonlinear Robin problem.
The original idea comes from \cite{is94}, where a linearization strategy is used to reduce the inverse problem for the semilinear equation to the corresponding problem for the linearized equation.
While their proof does not require much smoothness out of the nonlinear term, it does require the linearized equation to satisfy a maximum principle.
Recently it was noticed that one can identify higher order derivatives by solving inverse problems for higher order linearizations.
In this way, the full Taylor expansion can be reconstructed, see for example \cite{llls20}.
The higher order linearization method has proved to be quite flexible and has been applied to a wide range of equations, but has the drawback that the nonlinear term to be identified must be real-analytic.

The method in \cite{is94} was generalized to ill-posed equations in \cite{jns23}. In the present article, we adapt the method in \cite{jns23} to the inverse nonlinear Robin problem.
In this case, however, it is not necessary to solve an inverse problem for the linearized equation.
Instead it is enough to apply unique continuation to the nonlinear equation similar to \cite{fi06} and then apply Runge approximation to the linearized equation.
When using Runge approximation on the linear equation, the goal is to gain some information about the solutions of the nonlinear equation.
The link to pass information from the linear to nonlinear problem is a parametrization of the solutions of the nonlinear equation on solutions of the linearized equation, as in \cite{jns23}.

\subsection*{Acknowledgements}
This work is supported by the Research Council of Finland (Centre of Excellence in Inverse Modelling and Imaging and FAME Flagship, 312121 and 359208).
The author would like to thank Professor Mikko Salo for many helpful discussions and in particular for helping to navigate the vast fields of PDEs and inverse problems.

\section{Solvability of the nonlinear equation}\label{sec-solution-map}
In this section we prove solvability results for the nonlinear equation \eqref{eq-nonlinear}, and the purpose of these results is twofold.
On the one hand the results ensure that the Cauchy data measurements contain enough information.
On the other hand the results provide the link between linear and nonlinear theory which plays a crucial role in the identification proof.
We consider weak solutions of \eqref{eq-nonlinear}.
More precisely, \( u\in W^{1,p}(\Omega) \) is said to be a weak solution of \eqref{eq-nonlinear} if and only if it satisfies
\begin{equation*}
	\int_{\Omega}\gamma\nabla u\cdot \nabla \varphi\,dx + \int_{\Gamma_{I}}a(x,u)\varphi\,ds = \int_{\Gamma_{A}}f\varphi\,ds\quad\forall \varphi\in W^{1,p'}(\Omega).
\end{equation*}
This is justified in the usual way using integration by parts.
Note that the composition \( \Gamma_{I}\ni x\mapsto a(x,u(x)) \) is in \( L^{\infty}(\Gamma_{I}) \) when \( a\in\mathcal{A}_{ad} \) and \( u\in W^{1,p}(\Omega) \) with \( p>n \).
Measurability follows from Theorem 4.51 in \cite{ab06} since \( a \) is a Carathéodory function, and it is bounded because the range of \( u \) is bounded and \( a \) is uniformly bounded on bounded subsets of \( \mathbb{R} \).
The above integral over \( \Gamma_{I} \) is therefore well-defined.

The kernel of the following linear equation appear throughout the article,
\begin{equation}\label{eq-linear-kernel}
\begin{cases}
	-\operatorname{div}(\gamma\nabla v) = 0&\text{in }\Omega, \\
	\partial_{\nu,\gamma}v = f&\text{on }\Gamma_{A}, \\
	\partial_{\nu,\gamma}v + qv = 0&\text{on }\Gamma_{I}.
\end{cases}
\end{equation}
We use both the kernel \( \mathcal{N} \) and the trace space of the kernel \( \mathcal{N}_{\Gamma_{A}} \) defined as follows
\begin{equation*}
\begin{aligned}
	\mathcal{N} &\coloneqq \{v\in W^{1,p}(\Omega)\colon v \text{ is a weak solution of \eqref{eq-linear-kernel} with } f = 0\}, \\
	\mathcal{N}_{\Gamma_{A}} &\coloneqq \{v\vert_{\Gamma_{A}}\colon v\in \mathcal{N} \}.
\end{aligned}
\end{equation*}
The coefficient \( q \) used in the Robin condition in the definitions of these spaces is always an \( L^{\infty}(\Gamma_{I}) \) function, and everywhere in this section except Lemma \ref{lemma-forced-well-posedness} it is given explicitly by \( \partial_{u}a(x,w) \) where \( w \) is a function known from the context.

The following analysis relies on the decompositions
\begin{equation}\label{eq-decompositions}
\begin{aligned}
	L^{p}(\Omega) &= \mathcal{N}\oplus\mathcal{N}^{\perp}, \\
	L^{p}(\Gamma_{A}) &= \mathcal{N}_{\Gamma_{A}}\oplus\mathcal{N}_{\Gamma_{A}}^{\perp_{L}},\\
	W^{1-1/p',p'}_{0}(\partial\Omega,\Gamma_{A})^{*} &= \mathcal{N}_{\Gamma_{A}} \oplus \mathcal{N}_{\Gamma_{A}}^{\perp_{W}}, \\
\end{aligned}
\end{equation}
where \( \mathcal{N}_{\Gamma_{A}}^{\perp_{L}} \) and \( \mathcal{N}_{\Gamma_{A}}^{\perp_{W}} \) denotes the subspaces of \( L^{p}(\Gamma_{A}) \) and \( W^{1-1/p',p'}_{0}(\partial\Omega,\Gamma_{A})^{*} \), respectively, that annihiliate \( \mathcal{N}_{\Gamma_{A}} \) (For a definition of \( W^{1-1/p',p'}_{0}(\partial\Omega,\Gamma_{A})^{*} \), see equations \eqref{eq-functional-restriction}, \eqref{eq-traces-space-support} in the appendix).
In the remainder of the article, we write \( \mathcal{N}_{\Gamma_{A}}^{\perp} \) for both of these spaces, as it is clear from the context when the objects are functions or functionals.
Corresponding to each decomposition, we have projections
\begin{equation}\label{eq-projections}
\begin{aligned}
	P_{\mathcal{N}}&\colon L^{p}(\Omega)\to \mathcal{N}, \\
	P_{\mathcal{N}_{\Gamma_{A}}}&\colon L^{p}(\Gamma_{A})\to \mathcal{N}_{\Gamma_{A}}, \\
	P_{\mathcal{N}_{\Gamma_{A}}^{*}}&\colon W_{0}^{1-1/p',p'}(\partial\Omega,\Gamma_{A})^{*}\to \mathcal{N}_{\Gamma_{A}}.
\end{aligned}
\end{equation}
The decompositions \eqref{eq-decompositions}, as well as explicit formulas for the projections, are established in Lemma \ref{lemma-kernel-projection}.

While the linear Robin problem is not assumed to be well-posed, we do require some notion of well-posedness to hold.
If one restricts attention to a suitable subspace then one can establish a sufficiently strong well-posedness result.
This is the content of the next lemma.
Using the well-posedness in this subspace we are able to solve a nonlinear fixed point equation in Lemma \ref{lemma-fixed-point}.
The solution of this fixed point equation is then used to construct solutions of the nonlinear Robin problem in Lemma \ref{lemma-solution-map}.
\begin{lemma}\label{lemma-forced-well-posedness}
Let \( 1<p<\infty \), \( n\geq 2 \) and \( \Omega,\partial\Omega,\Gamma_{A},\Gamma_{I} \) satisfy \ref{assumption-1} and \ref{assumption-2}.
Assume that \( \gamma \) satisfies \ref{assumption-3} and let \( q\in L^{\infty}(\Gamma_{I}) \).
For any \( f,g\in W^{1-1/p',p'}(\partial\Omega)^{*} \) satisfying \( \operatorname{supp}(f)\subseteq\Gamma_{A} \), \( \operatorname{supp}(g)\subseteq\Gamma_{I} \) there exist a unique \( \Phi=\Phi(f,g)\in N_{\Gamma_{A}} \) given by
\begin{equation}\label{eq-forced-wellposedness-2}
	\Phi(f,g) \coloneqq P_{\mathcal{N}_{\Gamma_{A}}^{*}}(f+g)
\end{equation}
such that the equation
\begin{equation*}
\begin{cases}
	-\operatorname{div}(\gamma\nabla u) = 0&\text{in }\Omega, \\
	\partial_{\nu,\gamma}u = f-\Phi&\text{on }\Gamma_{A}, \\
	\partial_{\nu,\gamma}u + qu = g&\text{on }\Gamma_{I},
\end{cases}
\end{equation*}
admits weak solutions in \( W^{1,p}(\Omega) \).
Moreover, there is a unique solution whose trace annihilates the kernel, \( u\vert_{\Gamma_{A}}\in\mathcal{N}_{\Gamma_{A}}^{\perp} \), and this solution satisfies
\begin{equation}\label{eq-forced-wellposedness-3}
	\norm{u}_{W^{1,p}(\Omega)} \leq C(\norm{f}_{W^{1-1/p',p'}(\partial\Omega)^{*}} + \norm{g}_{W^{1-1/p',p'}(\partial\Omega)^{*}}).
\end{equation}
\end{lemma}
\begin{proof}
By Lemma \ref{lemma-linear-solvability}, a solution exists if and only if the functional
\begin{equation}\label{eq-forced-wellposedness-1}
	F(\varphi) \coloneqq \inner{f}{\varphi} - \int_{\Gamma_{A}}\Phi(f,g)\varphi\,ds + \inner{g}{\varphi}
\end{equation}
annihilates the kernel \( \mathcal{N} \).
Let \( \psi_{1},\ldots,\psi_{N} \) be a basis of \( \mathcal{N} \) as in Lemma \ref{lemma-kernel-basis}, with traces onto \( \Gamma_{A} \) being an \( L^{2}(\Gamma_{A}) \)-orthonormal basis of \( \mathcal{N}_{\Gamma_{A}} \).
By integrating \( \Phi \), as given in \eqref{eq-forced-wellposedness-2}, against an arbitrary \( \varphi =\Sigma_{i=1}^{N} \alpha_{i}\psi_{i}\in\mathcal{N} \) over \( \Gamma_{A} \) and using the \( L^{2}(\Gamma_{A}) \)-orthonormality of \( \psi_{i} \), we get
\begin{equation*}
	\int_{\Gamma_{A}}\Phi(f,g)\varphi\,ds = \inner{f}{\varphi} + \inner{g}{\varphi}.
\end{equation*}
In other words, the functional \( F \)  in \eqref{eq-forced-wellposedness-1} annihilates \( \mathcal{N} \).
To see uniqueness of \( \Phi\in\mathcal{N}_{\Gamma_{A}} \), suppose there are two \( \Phi_{1},\Phi_{2} \) and denote by \( F_{1},F_{2} \) their corresponding functionals defined as in \eqref{eq-forced-wellposedness-1}.
Since \( \Phi_{2}-\Phi_{1}\in\mathcal{N}_{\Gamma_{A}} \), there exist a \( \varphi\in\mathcal{N} \) with \( \varphi\vert_{\Gamma_{A}} = \Phi_{2}-\Phi_{1} \).
Since both \( F_{1} \) and \( F_{2} \) annihilate \( \mathcal{N} \) it follows that
\begin{equation*}
	0 = F_{1}(\varphi) - F_{2}(\varphi) = \norm{\Phi_{2}-\Phi_{1}}_{L^{2}(\Gamma_{A})}^{2}
\end{equation*}
which shows that \( \Phi_{1}=\Phi_{2} \).
The remaining properties of the solution follow straight from Lemma \ref{lemma-linear-solvability}.
The estimate \eqref{eq-forced-wellposedness-3} follows from the same lemma, after noting that the functional \( H\in W^{1-1/p',p'}(\partial\Omega)^{*} \) induced by \( \Phi \), \( H(\varphi)\coloneqq \int_{\Gamma_{A}}\Phi\varphi\,ds \), satisfies
\begin{align*}
	\norm{H}_{W^{1-1/p',p'}(\partial\Omega)^{*}} &\leq \norm{\Phi}_{L^{p}(\Gamma_{A})} =\norm{P_{\mathcal{N}_{\Gamma_{A}}^{*}}(f+g)}_{L^{p}(\Gamma_{A})} \\
	&\leq C(\norm{f}_{W^{1-1/p',p'}(\partial\Omega)^{*}}+\norm{g}_{W^{1-1/p',p'}(\partial\Omega)^{*}}).\qedhere
\end{align*}
\end{proof}

\begin{lemma}[Fixed point equation]\label{lemma-fixed-point}
Let \( n\geq 2 \), \( p>n \), and \( \Omega,\partial\Omega,\Gamma_{A},\Gamma_{I} \) satisfy \ref{assumption-1} and \ref{assumption-2}.
Assume that \( \gamma \) satisfies \ref{assumption-3} and \( a\in\mathcal{A}_{ad} \).
Define \( R\colon W^{1,p}(\Omega)\to L^{p}(\Gamma_{I}) \) by
\begin{equation*}
	R(h)(x) \coloneqq \int_{0}^{1}[\partial_{u}a(x,w(x)+th(x))-\partial_{u}a(x,w(x))]h(x)\,dt,\quad x\in\Gamma_{I}
\end{equation*}
and let \( \Phi(f,g) \) be given by \eqref{eq-forced-wellposedness-2}.
Assume that \( w,v \in W^{1,p}(\Omega) \) and \( \norm{v}_{W^{1,p}(\Omega)}\leq \delta \) for \( \delta>0 \) sufficiently small.
Then there exists a unique solution \( r\in W^{1,p}(\Omega) \) of
\begin{equation}\label{eq-fixed-point}
\begin{cases}
	-\operatorname{div}(\gamma\nabla r) = 0&\text{in }\Omega, \\
	\partial_{\nu,\gamma}r = \Phi(0,R(v+r))&\text{on }\Gamma_{A}, \\
	\partial_{\nu,\gamma}r + \partial_{u}a(x,w)r = -R(v+r)&\text{on }\Gamma_{I},
\end{cases}
\end{equation}
satisfying \( r\vert_{\Gamma_{A}}\in\mathcal{N}^{\perp}_{\Gamma_{A}} \), \( \norm{r}_{W^{1,p}(\Omega)}\leq \delta \) and
\begin{equation}\label{eq-fixed-point-quadratic}
	\norm{r}_{W^{1,p}(\Omega)}\leq C\norm{v}_{W^{1,p}(\Omega)}^{2}.
\end{equation}
The map \( W^{1,p}(\Omega)\ni v\mapsto r\in W^{1,p}(\Omega) \) obtained in this manner is Lipschitz continuous.
\end{lemma}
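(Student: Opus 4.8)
The plan is to rewrite \eqref{eq-fixed-point} as a fixed point equation $r=T(r)$ on a small closed ball of $W^{1,p}(\Omega)$ and apply Banach's fixed point theorem, using Lemma \ref{lemma-forced-well-posedness} to invert the linear part at each step. Set $q\coloneqq\partial_u a(\cdot,w)$; since $p>n$ we have $W^{1,p}(\Omega)\hookrightarrow C(\overline{\Omega})$, so $w$ has a bounded trace, and together with $a\in C^{1,1}_{loc}(\mathbb{R};L^{\infty}(\partial\Omega))$ and \ref{assumption-5} this gives $q\in L^{\infty}(\partial\Omega)$ with $\operatorname{supp}(q)\subseteq\Gamma_{I}$, so Lemma \ref{lemma-forced-well-posedness} applies with this $q$. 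For $g\in L^{p}(\Gamma_{I})$, let $S(g)\in W^{1,p}(\Omega)$ be the solution of that lemma's forced problem with $f=0$ and right-hand side $g$, re-normalized within its affine solution space so that $S(g)\vert_{\partial\Omega}$ is $L^{2}(\partial\Omega)$-orthogonal to $\mathcal{N}$ (two solutions differ by an element of $\mathcal{N}$, on which $(\psi_{1},\psi_{2})\mapsto\int_{\partial\Omega}\psi_{1}\psi_{2}\,ds$ is positive definite, so this representative is unique); by Lemma \ref{lemma-forced-well-posedness} and the finite dimensionality of $\mathcal{N}$, the map $S\colon L^{p}(\Gamma_{I})\to W^{1,p}(\Omega)$ is linear and bounded, say $\norm{S(g)}_{W^{1,p}(\Omega)}\leq C_{0}\norm{g}_{L^{p}(\Gamma_{I})}$. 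Using the linearity of $\Phi$ in its second argument, one checks that $r$ solves \eqref{eq-fixed-point} with the asserted normalization $r\vert_{\partial\Omega}\in\mathcal{N}^{\perp_{L^2(\partial\Omega)}}$ if and only if $r=T(r)$ with $T(r)\coloneqq S(-R(v+r))$.

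The crux is a pair of quadratic estimates on $R$. Put $M\coloneqq\norm{w}_{L^{\infty}(\overline{\Omega})}+1$ and let $L$ be a Lipschitz constant for $\partial_u a(x,\cdot)$ on $[-M,M]$ valid for a.e.\ $x$, which exists by the $C^{1,1}_{loc}$ hypothesis. For $h\in W^{1,p}(\Omega)$ small enough that $\norm{h}_{L^{\infty}(\overline{\Omega})}\leq 1$, the values $w(x)+th(x)$, $t\in[0,1]$, lie in $[-M,M]$, so $\abs{R(h)(x)}\leq\int_{0}^{1}L\,t\,\abs{h(x)}^{2}\,dt=\tfrac{L}{2}\abs{h(x)}^{2}$ for a.e.\ $x\in\Gamma_{I}$, and hence $\norm{R(h)}_{L^{p}(\Gamma_{I})}\leq\tfrac{L}{2}\norm{h}_{L^{2p}(\Gamma_{I})}^{2}\leq C\norm{h}_{W^{1,p}(\Omega)}^{2}$ by the trace theorem and $W^{1,p}(\Omega)\hookrightarrow L^{\infty}(\Omega)$. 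Splitting $R(h_{1})-R(h_{2})$ into a term carrying the increment $\partial_u a(x,w+th_{1})-\partial_u a(x,w+th_{2})$ times $h_{1}$ and a term carrying $\partial_u a(x,w+th_{2})-\partial_u a(x,w)$ times $h_{1}-h_{2}$, the same Lipschitz bound gives the pointwise estimate $\abs{R(h_{1})-R(h_{2})}\leq C(\abs{h_{1}}+\abs{h_{2}})\abs{h_{1}-h_{2}}$, hence
\begin{equation*}
	\norm{R(h_{1})-R(h_{2})}_{L^{p}(\Gamma_{I})}\leq C\bigl(\norm{h_{1}}_{W^{1,p}(\Omega)}+\norm{h_{2}}_{W^{1,p}(\Omega)}\bigr)\norm{h_{1}-h_{2}}_{W^{1,p}(\Omega)}
\end{equation*}
for $h_{1},h_{2}$ in a fixed small ball, with $C=C(w)$.

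Combining these with $\norm{S(g)}_{W^{1,p}(\Omega)}\leq C_{0}\norm{g}_{L^{p}(\Gamma_{I})}$ and the bounds $\norm{v}_{W^{1,p}(\Omega)}\leq\delta$, $\norm{r}_{W^{1,p}(\Omega)}\leq\delta$ we obtain, for $\delta$ small, $\norm{T(r)}_{W^{1,p}(\Omega)}\leq C_{0}C(\norm{v}_{W^{1,p}(\Omega)}+\norm{r}_{W^{1,p}(\Omega)})^{2}\leq 4C_{0}C\delta^{2}\leq\delta$ and $\norm{T(r_{1})-T(r_{2})}_{W^{1,p}(\Omega)}\leq C_{0}C(\norm{v+r_{1}}_{W^{1,p}(\Omega)}+\norm{v+r_{2}}_{W^{1,p}(\Omega)})\norm{r_{1}-r_{2}}_{W^{1,p}(\Omega)}\leq 4C_{0}C\delta\norm{r_{1}-r_{2}}_{W^{1,p}(\Omega)}\leq\tfrac{1}{2}\norm{r_{1}-r_{2}}_{W^{1,p}(\Omega)}$, so $T$ is a contraction of the closed ball $\overline{B}_{\delta}\subset W^{1,p}(\Omega)$ into itself once $\delta\leq(8C_{0}C)^{-1}$. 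Banach's fixed point theorem gives the unique such $r$; since $T$ maps into the normalized subspace, $r$ automatically satisfies $r\vert_{\partial\Omega}\in\mathcal{N}^{\perp_{L^2(\partial\Omega)}}$, and conversely any normalized solution in $\overline{B}_{\delta}$ is a fixed point of $T$, which gives uniqueness. The quadratic bound \eqref{eq-fixed-point-quadratic} then follows by absorption: $\norm{r}_{W^{1,p}(\Omega)}=\norm{T(r)}_{W^{1,p}(\Omega)}\leq C_{0}C(\norm{v}_{W^{1,p}(\Omega)}+\norm{r}_{W^{1,p}(\Omega)})^{2}\leq 2C_{0}C(\norm{v}_{W^{1,p}(\Omega)}^{2}+\norm{r}_{W^{1,p}(\Omega)}^{2})$, and since $2C_{0}C\norm{r}_{W^{1,p}(\Omega)}\leq\tfrac{1}{2}$ this yields $\norm{r}_{W^{1,p}(\Omega)}\leq 4C_{0}C\norm{v}_{W^{1,p}(\Omega)}^{2}$.

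Finally, for the Lipschitz dependence, let $r_{1},r_{2}$ be the solutions associated with $v_{1},v_{2}$, and write $r_{i}=T_{v_{i}}(r_{i})$; inserting $T_{v_{1}}(r_{2})$ and using the contraction estimate together with the Lipschitz bound on $R$ (applied to $h_{1}=v_{1}+r_{2}$, $h_{2}=v_{2}+r_{2}$, so $h_{1}-h_{2}=v_{1}-v_{2}$),
\begin{equation*}
	\norm{r_{1}-r_{2}}_{W^{1,p}(\Omega)}\leq\tfrac{1}{2}\norm{r_{1}-r_{2}}_{W^{1,p}(\Omega)}+C_{0}\norm{R(v_{1}+r_{2})-R(v_{2}+r_{2})}_{L^{p}(\Gamma_{I})}\leq\tfrac{1}{2}\norm{r_{1}-r_{2}}_{W^{1,p}(\Omega)}+C'\norm{v_{1}-v_{2}}_{W^{1,p}(\Omega)},
\end{equation*}
which gives the asserted Lipschitz continuity $v\mapsto r$. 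The only genuinely delicate points are the \emph{quadratic} (not merely linear) control of $R$, which is exactly why $R$ is built from the increments $\partial_u a(x,w+th)-\partial_u a(x,w)$ and why the $C^{1,1}_{loc}$ (Lipschitz-derivative) hypothesis is needed, and the minor bookkeeping required to pass from the $\mathcal{N}_{\Gamma_{A}}$-normalization of Lemma \ref{lemma-forced-well-posedness} to the $L^{2}(\partial\Omega)$-normalization asserted here; the rest is a routine fixed point estimate.
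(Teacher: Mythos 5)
Your proof is correct and follows essentially the same Banach fixed-point strategy as the paper: construct the solution operator of Lemma~\ref{lemma-forced-well-posedness} with $q=\partial_u a(\cdot,w)$, establish the quadratic bound and the difference bound on $R$ from the Lipschitz continuity of $\partial_u a$ on a bounded range, run the contraction on a small closed ball, and then absorb to get \eqref{eq-fixed-point-quadratic} and the Lipschitz dependence of $v\mapsto r$. The one detail where you diverge is worth flagging: the paper's proof uses the solution map $G$ with the $\mathcal{N}_{\Gamma_A}^{\perp}$ normalization directly (which is what the converse direction of Lemma~\ref{lemma-solution-map} actually needs), whereas you re-project so that $r\vert_{\partial\Omega}\in\mathcal{N}^{\perp_{L^2(\partial\Omega)}}$, matching the lemma's literal wording --- a harmless choice, since the two normalizations differ only by an element of the finite-dimensional $\mathcal{N}$ and your argument correctly checks that this re-projection is linear, bounded, and preserves the fixed-point structure.
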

\begin{proof}
Let \( G\colon W^{1-1/p',p'}(\partial\Omega)^{*}\times W^{1-1/p',p'}(\partial\Omega)^{*} \to W^{1,p}(\Omega) \) be the bounded solution map from Lemma \ref{lemma-forced-well-posedness} which maps \( f,g \) to a solution \( u = G(f,g) \) with \( u\vert_{\Gamma_{A}}\in\mathcal{N}_{\Gamma_{A}}^{\perp} \) corresponding to the boundary data \( f-\Phi(f,g) \) and \( g \).
For a fixed function \( v\in W^{1,p}(\Omega) \) define the map \( T\colon W^{1,p}(\Omega)\to W^{1,p}(\Omega) \) by \( T_{v}(r)\coloneqq G(0,-R(v+r)) \).
We prove existence of the solution \( r \) by proving existence of a fixed point for \( T \), \( r = T_{v}(r) \)

First, we show that \( T \) maps a small \( \delta \)-ball in \( W^{1,p}(\Omega) \) into itself, provided \( v \) belong to the same \( \delta \)-ball.
We get from the boundedness of \( G \) that
\begin{equation*}
	\norm{T_{v}(r)}_{W^{1,p}(\Omega)} \leq C \norm{R(v+r)}_{W^{1-1/p',p'}(\partial\Omega)^{*}} \leq C \norm{R(v+r)}_{L^{p}(\Gamma_{I})}.
\end{equation*}
Since \( p>n \) the Sobolev embedding \( W^{1,p}(\Omega)\hookrightarrow C(\overline{\Omega}) \) is bounded and \( w,v,r \in C(\overline{\Omega})\).
By taking the supremum followed by using the Lipschitz property of \( \partial_{u}a(x,w) \) we get
\begin{equation}\label{eq-fixed-point-2}
\begin{aligned}
	 \norm{R(v+r)}_{L^{p}(\Omega)} &\leq C\norm{\partial_{u}a(x,w+t[v+r])-\partial_{u}a(x,w)}_{L^{\infty}(\Gamma_{I})}\norm{v+r}_{L^{\infty}(\Gamma_{I})} \\
	 &\leq C\norm{v+r}_{L^{\infty}(\Gamma_{I})}^{2} \\
	 &\leq C\norm{v+r}_{W^{1,p}(\Omega)}^{2}
\end{aligned}
\end{equation}
Since \( \norm{v}_{W^{1,p}(\Omega)}\leq \delta \) and \( \norm{r}_{W^{1,p}(\Omega)}\leq \delta \), it follows that
\begin{equation*}
	\norm{T_{v}(r)}_{W^{1,p}(\Omega)} \leq \delta
\end{equation*}
for small enough \( \delta \).

We next prove the contraction property.
For \( h_{1}=v+r_{1} \), \( h_{2}=v+r_{2} \) we have
\begin{equation*}
\begin{aligned}
	R(v+r_{1})-R(v+r_{2}) = &\int_{0}^{1}\big[\partial_ua(x,w+th_1)-\partial_ua(x,w)\big](h_1-h_2)\,dt \\
	&+ \int_{0}^{1}\big[\partial_ua(x,w+th_1) - \partial_ua(x,w+th_2)\big]h_2\,dt.
\end{aligned}
\end{equation*}
By using the Lipschitz property of \( \partial_{u}a \) we find that
\begin{equation*}
	\abs*{R(v+r_{1})-R(v+r_{2})} \leq C(\abs{v+r_{1}}+\abs{v+r_{2}})\abs{r_{1}-r_{2}}
\end{equation*}
uniformly on \( \Gamma_{I} \).
Using this, we estimate
\begin{equation*}
\begin{aligned}
	\norm{T_{v}(r_{1})-T_{v}(r_{2})}_{W^{1,p}(\Omega)} &\leq C\norm{R(v+r_{1})-R(v+r_{2})}_{L^{p}(\Gamma_{I})} \\
	&\leq C\norm{(\abs{v+r_1}+\abs{v+r_2})\abs{r_1-r_2}}_{L^{p}(\Gamma_{I})} \\
	&\leq C4\delta\norm{r_1-r_2}_{W^{1,p}(\Omega)} \\
	&\leq \frac{1}{2}\norm{r_1-r_2}_{W^{1,p}(\Omega)}
\end{aligned}
\end{equation*}
provided \( \delta \) is small enough.
So \( r\mapsto T_{v}(r) \) is indeed a contraction and Banach's fixed point theorem ensures the existence of a fixed point \( r = T_{v}(r) \).

Let \( r \) be the fixed point \( r= T_{v}(r) \) corresponding to \( v \).
It follows from \eqref{eq-fixed-point-2} that \( \norm{r}_{W^{1,p}(\Omega)}\leq C\norm{v+r}_{W^{1,p}(\Omega)}^{2} \).
By applying the triangle inequality and using the smallness of \( v \) and \( r \), all terms involving \( r \) can be absorbed on the left-hand side and \eqref{eq-fixed-point-quadratic} follows.

Next we prove that the map \( v\mapsto r \) is Lipschitz continuous.
Almost identical computations as above give the estimates
\begin{equation*}
\begin{aligned}
	\norm{r_{1}-r_{2}}_{W^{1,p}(\Omega)} &= \norm{T_{v_{1}}(r_{1}) - T_{v_{2}}(r_{2})}_{W^{1,p}(\Omega)} \\
	&\leq C\norm{R(v_{1}+r_{1})-R(v_{2}+r_{2})}_{L^{p}(\Gamma_{I})} \\
	&\leq C\norm{(\abs{v_{1}+r_{1}}+\abs{v_{2}+r_{2}})\abs{v_{1}+r_{1}-v_{2}-r_{2}}}_{L^{p}(\Gamma_{I})} \\
	&\leq C(\norm{v_{1}+r_{1}}_{L^{\infty}(\Gamma_{I})}+\norm{v_{2}+r_{2}}_{L^{\infty}(\Gamma_{I})})\norm{v_{1}+r_{1}-v_{2}-r_{2}}_{L^{\infty}(\Gamma_{I})}.
\end{aligned}
\end{equation*}
Using the boundedness of the Sobolev embedding and smallness of \( v,r \) results in
\begin{equation*}
	\norm{r_{1}-r_{2}}_{W^{1,p}(\Omega)} \leq 4C\delta(\norm{v_{1}-v_{2}}_{W^{1,p}(\Omega)}+\norm{r_{1}-r_{2}}_{W^{1,p}(\Omega)}).
\end{equation*}
Since \( \delta \) is small, the term \( \norm{r_{1}-r_{2}}_{W^{1,p}(\Omega)} \) can be absorbed on the left and we get
\begin{equation*}
	\norm{r_{1}-r_{2}}_{W^{1,p}(\Omega)} \leq C\norm{v_{1}-v_{2}}_{W^{1,p}(\Omega)}.\qedhere
\end{equation*}
\end{proof}

\begin{lemma}[Solution map]\label{lemma-solution-map}
Let \( n\geq 2 \), \( p>n \), and \( \Omega,\partial\Omega,\Gamma_{A},\Gamma_{I} \) satisfy \ref{assumption-1} and \ref{assumption-2}.
Assume that \( \gamma \) satisfies \ref{assumption-3} and \( a\in\mathcal{A}_{ad} \).
Let \( w \in W^{1,p}(\Omega) \) be a weak solution corresponding to Neumann data \( f=h \), for \( h\in W^{1-1/p',p'}(\partial\Omega)^{*} \) with \( \operatorname{supp}(h)\subseteq\Gamma_{A} \), of the equation
\begin{equation}\label{eq-nonlinear-solution-map}
\begin{cases}
	-\operatorname{div}(\gamma\nabla u) = 0&\text{in }\Omega, \\
	\partial_{\nu,\gamma}u = f&\text{on }\Gamma_{A}, \\
	\partial_{\nu,\gamma}u + a(x,u) = 0&\text{on }\Gamma_{I}.
\end{cases}
\end{equation}
For \( \delta>0 \) sufficiently small, let \( B_{\delta}\coloneqq \{v\in W^{1,p}(\Omega)\colon \norm{v}_{W^{1,p}(\Omega)}\leq \delta \} \) and let \( Q\colon B_{\delta}\mapsto W^{1,p}(\Omega) \) be the Lipschitz solution map obtained in Lemma \ref{lemma-fixed-point}.
Then \( S\colon B_{\delta}\to W^{1,p}(\Omega) \) given by \( S(v)\coloneqq w+v+Q(v) \) is a Lipschitz continuous map.
If, in addition, \( v \) is a weak solution of the equation
\begin{equation}\label{eq-linearized}
\begin{cases}
	-\operatorname{div}(\gamma\nabla v) = 0&\text{in }\Omega, \\
	\partial_{\nu,\gamma}v = g&\text{on }\Gamma_{A}, \\
	\partial_{\nu,\gamma}v + \partial_{u}a(x,w)v = 0&\text{on }\Gamma_{I},
\end{cases}
\end{equation}
for \( g\in W^{1-1/p',p'}(\partial\Omega)^{*} \) with \( \operatorname{supp}(g)\subseteq\Gamma_{A} \) then \( u = S(v) \) is a weak solution of \eqref{eq-nonlinear-solution-map} with \( f = h+g+\Phi(0,R(v+r)) \).

Conversely, for any solution \( u \) of \eqref{eq-nonlinear-solution-map} with \( \norm{u-w}_{W^{1,p}(\Omega)}\leq \tilde{\delta} \) for small enough \( \tilde{\delta}<\delta \) there exists a solution \( v\in B_{\delta} \) of \eqref{eq-linearized} with \( g\in W^{1-1/p',p'}(\partial\Omega)^{*} \) and \( \operatorname{supp}(g)\subseteq\Gamma_{A} \) such that \( u=w+v+Q(v) \).
\end{lemma}
\begin{proof}
Since Lemma \ref{lemma-fixed-point} establishes existence and Lipschitz continuity of the map \( Q(v) \), the map \( S(v) \) is well-defined and Lipschitz continuous.
Denote \( r = Q(v) \), then we need only verify that the function \( u = w+v+r \) satisfies the required weak form equation when \( v \) is a weak solution of \eqref{eq-linearized}.
Now \( w,v,r \) satisfy, for any \( \varphi\in W^{1,p'}(\Omega) \), the equations
\begin{equation*}
\begin{aligned}
	\int_{\Omega}\gamma\nabla w\cdot \nabla \varphi\,dx + \int_{\Gamma_{I}}a(x,w)\varphi\,ds &= \inner{h}{\varphi},\\
	\int_{\Omega}\gamma\nabla v\cdot \nabla \varphi\,dx + \int_{\Gamma_{I}}\partial_{u}a(x,w)v\varphi\,ds &= \inner{g}{\varphi},\\
	\int_{\Omega}\gamma\nabla r\cdot \nabla \varphi\,dx + \int_{\Gamma_{I}}[\partial_{u}a(x,w)r + R(v+r)]\varphi\,ds &= \int_{\Gamma_{A}}\Phi(v+r)\varphi\,ds.
\end{aligned}
\end{equation*}
By substituting \( u = w+v+r \) and using the first order Taylor expansion of \( a(x,w+v+r) \) at \( w \) in direction \( v+r \), we get
\begin{equation*}
\begin{aligned}
	\int_{\Omega}\gamma\nabla u\cdot \nabla\varphi\,dx &+ \int_{\Gamma_{I}}a(x,u)\varphi\,ds = \int_{\Omega}\gamma\nabla w\cdot \nabla\varphi\,dx + \int_{\Gamma_{I}}a(x,w)\varphi\,ds \\
	&+ \int_{\Omega}\gamma\nabla v\cdot \nabla\varphi\,dx + \int_{\Gamma_{I}}\partial_{u}a(x,w)v\varphi\,ds \\
	&+ \int_{\Omega}\gamma\nabla r\cdot \nabla\varphi\,dx + \int_{\Gamma_{I}}[\partial_{u}a(x,w)r + R(v+r)]\varphi\,ds \\
	&=\inner{h}{\varphi} + \inner{g}{\varphi} + \int_{\Gamma_{A}}\Phi(0,R(v+r))\varphi\,ds.
\end{aligned}
\end{equation*}
Hence, \( u \) is a weak solution of \eqref{eq-nonlinear-solution-map}.

To prove the converse, let \( u \) be any solution with \( \norm{w-u}_{W^{1,p}(\Omega)}\leq\tilde{\delta} \).
Define \( \psi \coloneqq P_{\mathcal{N}}(u-w) \) to be the projection onto the kernel \( \mathcal{N} \) (Recall \( P_{\mathcal{N}} \), \( P_{\mathcal{N}_{\Gamma_{A}}^{*}} \) from \eqref{eq-projections}).
Note that \( \norm{\psi}_{W^{1,p}(\Omega)}\leq C\tilde{\delta} \) since \( P_{\mathcal{N}} \) is bounded.
Let \( \varphi \) be the unique function satisfying \( \varphi\vert_{\Gamma_{A}}\in\mathcal{N}_{\Gamma_{A}}^{\perp} \) and solving
\begin{equation*}
\begin{cases}
    -\operatorname{div}(\gamma\nabla \varphi) = 0&\text{in }\Omega, \\
    \partial_{\nu,\gamma}\varphi = [\operatorname{Id}-P_{\mathcal{N}_{\Gamma_{A}}^{*}}](\partial_{\nu,\gamma}u-\partial_{\nu,\gamma}w)&\text{on }\Gamma_{A}, \\
    \partial_{\nu,\gamma}\varphi + \partial_{u}a(x,w)\varphi = 0&\text{on }\Gamma_{I}.
\end{cases}
\end{equation*}
The existence of \( \varphi \) is ensured by Lemma \ref{lemma-forced-well-posedness}.
It follows from the boundedness of \( P_{\mathcal{N}_{\Gamma_{A}}^{*}} \) and the boundedness of the conormal derivative \( u\mapsto \partial_{\nu,\gamma}u \) that \( \norm{\varphi}\leq C\tilde{\delta} \).
Let \( v\coloneqq \psi+\varphi \) and define \( r\coloneqq u-w-v \).
We require that \( r\vert_{\Gamma_{A}}\in\mathcal{N}_{\Gamma_{A}}^{\perp} \).
To prove this, first note that
\begin{equation}\label{eq-solution-map-projection}
	u-w-\psi = [\operatorname{Id}-P_{\mathcal{N}}](u-w)\in\mathcal{N}^{\perp}.
\end{equation}
Since \( u-w-\psi \) annihilate \( \mathcal{N} \) their trace onto \( \Gamma_{A} \) cannot belong to \( \mathcal{N}_{\Gamma_{A}} \), by Lemma \ref{lemma-kernel-basis}.
It then follows from \eqref{eq-decompositions} that
\begin{equation*}
	(u-w-\psi)\vert_{\Gamma_{A}}\in\mathcal{N}_{\Gamma_{A}}^{\perp}.
\end{equation*}
Since also \( \varphi\in\mathcal{N}_{\Gamma_{A}}^{\perp} \) we conclude that \( r\in\mathcal{N}_{\Gamma_{A}}^{\perp} \).
Before finding the weak form equation for \( r \), note that by using \( u = w+v+r \) and Taylor expansion around \( w \) we have
\begin{equation*}
\begin{aligned}
	a(x,u) - a(x,w)-\partial_{u}a(x,w)v &= a(x,w+v+r) - a(x,w)-\partial_{u}a(x,w)v \\
	&=\partial_{u}a(x,w)r + R(v+r).
\end{aligned}
\end{equation*}
From the definition of \( \psi,\varphi \) we have
\begin{equation*}
\begin{aligned}
	(\partial_{\nu,\gamma}u - \partial_{\nu,\gamma}w - \partial_{\nu,\gamma}v)\vert_{\Gamma_{A}} &= (\partial_{\nu,\gamma}u - \partial_{\nu,\gamma}w - \partial_{\nu,\gamma}\psi - \partial_{\nu,\gamma}\varphi)\vert_{\Gamma_{A}} \\
	&= (\partial_{\nu,\gamma}u - \partial_{\nu,\gamma}w - [\operatorname{Id}-P_{\mathcal{N}_{\Gamma_{A}}^{*}}](\partial_{\nu,\gamma}u - \partial_{\nu,\gamma}w))\vert_{\Gamma_{A}} \\
	&= P_{\mathcal{N}_{\Gamma_{A}}^{*}}(\partial_{\nu,\gamma}u - \partial_{\nu,\gamma}w).
\end{aligned}
\end{equation*}
Using the previous two identities together with the weak form equations for \( u,w,v \) we find the weak form equation for \( r \),
\begin{equation*}
\begin{aligned}
	\int_{\Omega} \gamma\nabla r\cdot \nabla\rho\,dx = &\int_{\Omega} \gamma\nabla (u-w-v)\cdot \nabla\rho\,dx \\
	=&-\int_{\Gamma_{I}}[a(x,u)-a(x,w)-\partial_{u}a(x,w)v]\rho\,ds \\
	&+\inner{\partial_{\nu,\gamma}u}{\rho} - \inner{\partial_{\nu,\gamma}w}{\rho}-\inner{\partial_{\nu,\gamma}v}{\rho} \\
	=&-\int_{\Gamma_{I}}[\partial_{u}a(x,w)r+R(v+r)]\rho\,ds \\
	&+\int_{\Gamma_{A}}P_{\mathcal{N}_{\Gamma_{A}}^{*}}(\partial_{\nu,\gamma}u - \partial_{\nu,\gamma}w)\rho\,ds \\
\end{aligned}
\end{equation*}
for any \( \rho\in W^{1,p'}(\Omega) \).
Hence \( r \) is a weak solution of \eqref{eq-fixed-point} with the Neumann data on \( \Gamma_{A} \) being \( P_{\mathcal{N}_{\Gamma_{A}}^{*}}(\partial_{\nu,\gamma}u - \partial_{\nu,\gamma}w)\in\mathcal{N}_{\Gamma_{A}} \).
From the uniqueness of the function \( \Phi \) in Lemma \ref{lemma-forced-well-posedness}, it follows that \( P_{\mathcal{N}_{\Gamma_{A}}^{*}}(\partial_{\nu,\gamma}u - \partial_{\nu,\gamma}w) = \Phi(0,R(v+r)) \).
Now \( r \) is a weak solution of
\begin{equation}\label{eq-solution-map-fixed-point}
\begin{cases}
	-\operatorname{div}(\gamma\nabla r) = 0&\text{in }\Omega, \\
	\partial_{\nu,\gamma}r = \Phi(0,R(v+r))&\text{on }\Gamma_{A}, \\
	\partial_{\nu,\gamma}r + \partial_{u}a(x,w)r = -R(v+r)&\text{on }\Gamma_{I}.
\end{cases}
\end{equation}
Recall that \( \norm{v}_{W^{1,p}(\Omega)}\leq C\tilde{\delta} \) for some \( C>0 \) which possibly depends on \( w \) and \( a \) but does not depend on \( u \).
It follows from the definition of \( r \) that \( \norm{r}_{W^{1,p}(\Omega)}\leq (1+C)\tilde{\delta} \).
For \( \tilde{\delta}<\frac{\delta}{1+C} \), this solution \( r \) belongs to the same small ball in which Lemma \ref{lemma-fixed-point} establishes existence and uniqueness of solution to \eqref{eq-solution-map-fixed-point}.
Hence \( r = Q(v) \) and the decomposition \( u = w+v+Q(v) \) is established.
\end{proof}

\section{The inverse problem}\label{sec-inverse-problem}

In this section we prove Theorem \ref{thm-main-result-1} and Theorem \ref{thm-main-result-2}.
The proofs are very similar to the proofs of the main results in \cite{jns23}.
Compared to \cite{jns23}, however, a couple of steps are simplified in the present setting and the conclusions are stronger.
In an attempt to highlight these differences, we present the proofs in full detail.
\begin{proof}[Proof of Theorem \ref{thm-main-result-1}]
From Lemma \ref{lemma-solution-map} we get solutions \( u_{1,v} = w_{1} + v + Q(v) \) for the equation with nonlinearity \( a_{1} \), provided \( \norm{v}_{W^{1,p}(\Omega)}<\tilde{\delta} \) for some \( \tilde{\delta}>0 \).
After possibly decreasing \( \tilde{\delta} \) the Cauchy data of \( u_{1,v} \) belong to \( C_{a_{1}}^{w_{1},\delta} \).
Using \( C_{a_{1}}^{w_{1},\delta} = C_{a_{2}}^{w_{2},C\delta} \) we find solutions \( u_{2,v} \) for the nonlinearity \( a_{2} \) and having Cauchy data equal to that of \( u_{1,v} \).
Then \( u_{1,v}-u_{2,v}\in W^{1,p}(\Omega) \) satisfies
\begin{equation*}
\begin{cases}
    -\operatorname{div}(\gamma\nabla (u_{1,v}-u_{2,v})) = 0&\text{in }\Omega, \\
    u_{1,v}-u_{2,v} = 0&\text{on }\Gamma_{A}, \\
    \partial_{\nu,\gamma}(u_{1,v}-u_{2,v}) = 0&\text{on }\Gamma_{A},
\end{cases}
\end{equation*}
and it follows by unique continuation (Lemma \ref{lemma-ucp}) that \( u_{1,v} = u_{2,v} \) on \( \overline{\Omega} \).
By subtracting the two weak form equations for \( a_{1},a_{2} \) and using the equality \( u_{1,v} = u_{2,v} \) we get
\begin{equation*}
	\int_{\Gamma_{I}}[a_{1}(x,u_{1,v}(x))-a_{2}(x,u_{1,v}(x))]\varphi(x)\,ds = 0 \quad\forall \varphi\in W^{1,p'}(\Omega),
\end{equation*}
and hence
\begin{equation}\label{eq-abstract-identification-1}
    a_{1}(x,u_{1,v}(x)) = a_{2}(x,u_{1,v}(x))\quad\text{for a.e. }x\in\Gamma_{I}.
\end{equation}
In other words, the nonlinearities are equal on the graphs of the solutions \( u_{1,v} \).

For any \( 0<\theta<1 \), Runge approximation (Theorem \ref{theorem-runge-w1p}) ensures the existence of Neumann data \( g \) and a weak solution \( v_{\theta} \) of
\begin{equation*}
\begin{cases}
    -\operatorname{div}(\gamma\nabla v_{\theta}) = 0&\text{in }\Omega, \\
    \partial_{\nu,\gamma}v_{\theta} = g&\text{on }\Gamma_{A}, \\
    \partial_{\nu,\gamma}v_{\theta} + \partial a_{1}(x,w)v_{\theta} =0&\text{on }\Gamma_{I},
\end{cases}
\end{equation*}
such that \( \norm{v_{\theta}-1}_{C(\Gamma_{I})}<\theta \).
We have in particular that
\begin{equation}\label{eq-identification-1}
	v_{\theta}(x)\geq 1-\theta \quad\forall x\in\Gamma_{I}.
\end{equation}
For \( \abs{t}<\tilde{\delta}/\norm{v_{\theta}}_{W^{1,p}(\Omega)} \), we get solutions \( u_{1,tv_{\theta}} = w_{1} + tv_{\theta} + Q(tv_{\theta}) \) satisfying
\begin{equation}\label{eq-identification-2}
	\abs{Q(tv_{\theta}(x))} \leq \norm{Q(tv_{\theta})}_{C(\overline{\Omega})} \leq \norm{Q(tv_{\theta})}_{W^{1,p}(\Omega)} \leq C\abs{t}^{2} \norm{v_{\theta}}_{W^{1,p}(\Omega)}^{2}.
\end{equation}
Let \( 0 < \beta < 1-\theta \), \( \tau \coloneqq \frac{1-\theta-\beta}{C\norm{v_{\theta}}_{W^{1,p}(\Omega)}^{2}} \) and \( \alpha\coloneqq\min\{\tau, \tilde{\delta}/\norm{v_{\theta}}_{W^{1,p}(\Omega)}\} \).
For \( \abs{t}<\alpha \), any value in a possibly small interval can be attained by some solution \( u_{1,tv_{\theta}} \).
To see this, let \( x\in\Gamma_{I} \) be arbitrary and consider the map \( [-\alpha,\alpha]\ni t\mapsto \eta(t)\coloneqq u_{1,tv_{\theta}}(x)-w_{1}(x) \).
For \( 0<t<\alpha \) it follows from \eqref{eq-identification-1} and \eqref{eq-identification-2} that
\begin{equation*}
\begin{aligned}
	\eta(t) = u_{1,tv_{\theta}}(x)-w_{1}(x) &= t v_{\theta}(x)+Q(t v_{\theta}(x)) \\
    &\geq t(1-\theta - tC\norm{v}_{W^{1,p}(\Omega)}^{2}) \\
    &\geq t\beta
\end{aligned}
\end{equation*}
and by replacing \( t \) with \( -t \) we get
\begin{equation*}
\begin{aligned}
	\eta(-t) = u_{1,-tv_{\theta}}(x)-w_{1}(x) &= -t v_{\theta}(x)+Q(-t v_{\theta}(x)) \\
    &\leq -t(1-\theta) + t^{2}C\norm{v}_{W^{1,p}(\Omega)}^{2} \\
    &= -t(1-\theta - tC\norm{v}_{W^{1,p}(\Omega)}^{2}) \\
    &\leq -t\beta.
\end{aligned}
\end{equation*}
In particular, \( \eta(\alpha)\geq \alpha\beta \) and \( \eta(-\alpha)\leq -\alpha\beta \).
Since \( \eta(t) \) is continuous it follows from the intermediate value theorem that \( \eta \) is surjective onto \( [-\alpha\beta,\alpha\beta] \).
This show that for any \( x\in\Gamma_{I} \) and any \( \abs{\varepsilon}\leq \lambda\coloneqq\alpha\beta \) there exist a solution \( u \) such that \( u(x) = w_{1}(x) + \varepsilon \).
Together with \eqref{eq-abstract-identification-1} this completes the proof.
\end{proof}

\begin{proof}[Proof of Theorem \ref{thm-main-result-2}]
Starting with any solution \( u_{1} \) corresponding to \( a_{1} \) we get from \( C_{a_{1}}=C_{a_{2}} \) a solution \( u_{2} \) with nonlinearity \( a_{2} \) and having equal Cauchy data.
As in the proof of Theorem \ref{thm-main-result-1} we conclude that \( u_{1} = u_{2} \) on \( \overline{\Omega} \) by unique continuation.
Since the solutions are arbitrary, it now follows that the reachable sets are equal,
\begin{equation*}
	E_{a_{1}} = E_{a_{2}}.
\end{equation*}
Proceeding as in the proof of Theorem \ref{thm-main-result-1}, it follows that
\begin{equation*}
    a_{1}(x,z) = a_{2}(x,z)\quad\text{for a.e. }(x,z)\in E_{a_{1}}.
\end{equation*}
In other words, the nonlinearities are equal on the reachable sets.

It remains to show that the reachable set is open.
Let \( (x',z')\in E_{a_{1}} \) be arbitrary and let \( w \) be a solution with \( w(x') = z' \).
As in the proof of Theorem \ref{thm-main-result-1} we may find a \( \lambda>0 \) such that
\begin{equation}\label{eq-identification-4}
	\{(x,w(x)+\varepsilon)\colon \abs{\varepsilon}<\lambda,x\in\Gamma_{I}\}\subseteq E_{a_{1}}.
\end{equation}
Let \( I = (z'-\frac{\lambda}{2},z'+\frac{\lambda}{2})\subseteq\mathbb{R} \).
Since \( w \) is continuous, there exist a ball \( B \) centered at \( x' \) with radius small enough that \( \abs{w(x)-w(x')}<\frac{\lambda}{2} \) for any \( x\in B \) and such that the intersection \( \mathcal{U}\coloneqq B\cap\partial\Omega \) is contained in \( \Gamma_{I} \).
Then \( \mathcal{U}\times I \) is a subset of the left-hand side of \eqref{eq-identification-4} and \( \mathcal{U}\times I \) is therefore a neighborhood of \( (x',z') \) in \( E_{a_{1}} \).
Hence \( E_{a_{1}} \) is open.
\end{proof}

\appendix
\section{Theory for the linear Robin problem}\label{sec-appendix}
While the focus of the article is a nonlinear Robin problem, the developed theory depends heavily on the theory of the linear Robin problem.
The linear theory may be considered standard but is scattered throughout the literature and may be difficult to find.
For the sake of completeness, the required \( W^{1,p} \) weak solution theory for the linear Robin problem is developed in this appendix.
We prove the Fredholm alternative in \( W^{1,p}(\Omega) \), give a rigorous interpretation of the conormal derivative of weak solutions, and establish Runge approximation and unique continuation from Cauchy data.

For \( 1<p<\infty \) and \( 0<s<1 \), we use Sobolev spaces defined as
\begin{equation*}
\begin{aligned}
	W^{1,p}(\Omega)&\coloneqq \{u\in L^{p}(\Omega)\colon \partial_{x_{j}}u\in L^{p}(\Omega)\,\,\text{ for }j\in\{1\ldots,n\} \}, \\
	W^{s,p}(\Gamma)&\coloneqq \{u\in L^{p}(\Gamma)\colon\seminorm{u}_{W^{s,p}(\Gamma)}<\infty\},
\end{aligned}
\end{equation*}
where \( \Gamma\subseteq\partial\Omega \) is a nonempty open subset and the Gagliardo seminorm is given by
\begin{equation*}
	\seminorm{u}_{W^{s,p}(\Gamma)}\coloneqq \Big(\int_{\Gamma}\int_{\Gamma}\frac{\abs{u(x)-u(y)}^{p}}{\abs{x-y}^{n-1+sp}}\,ds\,ds\Big)^{1/p}.
\end{equation*}
The usual norms are used,
\begin{equation*}
\begin{aligned}
	\norm{u}_{W^{1,p}(\Omega)}&\coloneqq \Big(\norm{u}_{L^{p}(\Omega)}^{p} + \sum_{\abs{\alpha}=1}\norm{\partial_{\alpha}u}_{L^{p}(\Omega)}^{p}\Big)^{1/p}, \\
	\norm{u}_{W^{s,p}(\Gamma)}&\coloneqq \Big(\norm{u}_{L^{p}(\Gamma)}^{p} + \seminorm{u}_{W^{s,p}(\Gamma)}^{p}\Big)^{1/p}.
\end{aligned}
\end{equation*}

Let \( f\in W^{1,p'}(\Omega)^{*} \), \( g,h\in W^{1-1/p',p'}(\partial\Omega)^{*} \), with \( \operatorname{supp}(g)\subseteq\Gamma_{A}, \operatorname{supp}(h)\subseteq\Gamma_{I} \) and consider the equation
\begin{equation}\label{eq-robin-linear}
\begin{cases}
	-\operatorname{div}(\gamma\nabla u) = f&\text{in }\Omega, \\
	\partial_{\nu,\gamma}u = g&\text{on }\Gamma_{A}, \\
	\partial_{\nu,\gamma}u + qu = h&\text{on }\Gamma_{I},
\end{cases}
\end{equation}
The weak form of this equation is 
\begin{equation*}
	\int_{\Omega}\gamma\nabla u\cdot\nabla\varphi\,dx + \int_{\Gamma_{I}}qu\varphi\,ds = \inner{f}{\varphi}+\inner{g}{\varphi\vert_{\partial\Omega}}+\inner{h}{\varphi\vert_{\partial\Omega}}\quad\forall\varphi\in W^{1,p'}(\Omega).
\end{equation*}
Note, however, that the functionals \( g,h \) are composed with the boundary trace, and these compositions are functionals in \( W^{1,p'}(\Omega)^{*} \).
It is therefore no loss in generality to replace the three functionals \( f,g,h \) by a single functional \( F\in W^{1,p'}(\Omega)^{*} \) in the weak form.
By doing this, we obtain the following weak form, which is the equation we study,
\begin{equation}\label{eq-robin-linear-weak}
	\int_{\Omega}\gamma\nabla u\cdot\nabla\varphi\,dx + \int_{\Gamma_{I}}qu\varphi\,ds = \inner{F}{\varphi}\quad\forall\varphi\in W^{1,p'}(\Omega).
\end{equation}
We rely on Fredholm theory and therefore need to understand the adjoint equation.
Consider the bilinear form \( B\colon W^{1,p}(\Omega)\times W^{1,p'}(\Omega)\to\mathbb{R} \) and the bounded linear map \( L\colon W^{1,p}(\Omega)\to W^{1,p'}(\Omega)^{*} \) defined by
\begin{equation}\label{eq-weak-operator}
\begin{aligned}
	B(u,v) &\coloneqq \int_{\Omega}\gamma\nabla u\cdot \nabla v\,dx + \int_{\Gamma_{I}}quv\,ds, \\
	[Lu](v)&\coloneqq B(u,v).
\end{aligned}
\end{equation}
The Banach space adjoint \( L^{*} \) is defined as a map \( W^{1,p'}(\Omega)^{**}\to W^{1,p}(\Omega)^{*} \).
The Sobolev spaces are reflexive, so we may view it as a map \( W^{1,p'}(\Omega)\to W^{1,p}(\Omega)^{*} \).
If we abuse notation and identify \( \varphi\in W^{1,p'}(\Omega) \) with its induced functional in \( W^{1,p'}(\Omega)^{**} \), then
\begin{equation*}
	[L^{*}\varphi](u) \coloneqq \varphi(Lu) = [Lu](\varphi).
\end{equation*}
By defining the adjoint bilinear form \( B^{*}(u,v) \coloneqq B(v,u) \), we have
\begin{equation*}
	[L^{*}\varphi](u) = B^{*}(\varphi,u).
\end{equation*}
Since the coefficient \( \gamma \) is symmetric, the adjoint operators \( L^{*} \) and \( B^{*} \) are given by the same formulas as \( L \) and \( B \) but with the roles of \( W^{1,p}(\Omega) \) and \( W^{1,p'}(\Omega) \) swapped.

The kernel of \( L \) and its trace space onto the boundary are denoted by
\begin{equation}\label{eq-kernels}
\begin{aligned}
	\mathcal{N} &\coloneqq \{u\in W^{1,p}(\Omega)\colon u \text{ solves } Lu = 0 \}, \\
	\mathcal{N}_{\Gamma_{A}} &\coloneqq \{u\vert_{\Gamma_{A}}\colon u\in \mathcal{N} \}.
\end{aligned}
\end{equation}

The conormal derivative is very important, but does not exist for general \( W^{1,p}(\Omega) \) functions.
By characterizing the conormal derivative as a functional, however, it can be generalized to the space
\begin{equation*}
\begin{aligned}
	E(L; L^{p}(\Omega)) \coloneqq \{&u\in W^{1,p}(\Omega)\colon \exists f\in L^{p}(\Omega) \\
	&\text{ s.t. } Lu=f\text{ in the sense of distributions}\}.
\end{aligned}
\end{equation*}
This is proved in Lemma \ref{lemma-conormal}.
All solutions used in the study of the inverse Robin problem belong to this space.

Let \( W^{1-1/p,p}_{0}(\partial\Omega,\Gamma) \) be the subset of functions in \( W^{1-1/p,p}(\partial\Omega) \) which vanish outside of \( \Gamma\subset\partial\Omega \),
\begin{equation}\label{eq-traces-space-support}
	W^{1-1/p,p}_{0}(\partial\Omega,\Gamma) = \{u\in W^{1-1/p,p}(\partial\Omega)\colon \operatorname{supp}(u)\subseteq\Gamma \}.
\end{equation}
The restriction of a functional \( f\in W^{1-1/p',p'}(\partial\Omega)^{*} \) to the subset \( \Gamma \) is the functional \( f\vert_{\Gamma} \in W^{1-1/p',p'}_{0}(\partial\Omega,\Gamma)^{*} \) defined by
\begin{equation}\label{eq-functional-restriction}
	\inner{f\vert_{\Gamma}}{\varphi} \coloneqq \inner{f}{\varphi}\quad\forall \varphi\in W^{1-1/p',p'}_{0}(\partial\Omega,\Gamma).
\end{equation}

\begin{lemma}
The space \( E(L;L^{p}(\Omega)) \) is a Banach space when equipped with the norm
\begin{equation*}
	\norm{u}_{E(L;L^{p}(\Omega))} \coloneqq \norm{u}_{W^{1,p}(\Omega)} + \norm{Lu}_{L^{p}(\Omega)}.
\end{equation*}
\end{lemma}
\begin{proof}
Let \( (u_{n})_{n\in\mathbb{N}}\subset E(L;L^{p}(\Omega)) \) be a Cauchy sequence, and let \( (f_{n})_{n\in\mathbb{N}}\subset L^{p}(\Omega) \) be the sequence of functions such that \( Lu_{n} = f_{n} \) in the sense of distributions.
Then \( f_{n} \) is Cauchy in \( L^{p}(\Omega) \) and \( u_{n} \) is Cauchy in \( W^{1,p}(\Omega) \), hence the limits \( f\in L^{p}(\Omega) \) and \( u\in W^{1,p}(\Omega) \) exist.
The convergence \( u_{n}\to u \) in \( W^{1,p}(\Omega) \) implies the convergence \( Lu_{n}\to Lu \) in \( W^{1,p'}(\Omega)^{*} \) by Hölder's inequality and we get in particular weak convergence \( Lu_{n}\rightharpoonup Lu \).
For any \( \varphi\in C_{c}^{\infty}(\Omega) \),
\begin{equation*}
	\int_{\Omega}f\varphi\,dx = \lim_{n}\int_{\Omega}f_{n}\varphi\,dx = \lim_{n} [Lu_{n}](\varphi) = [Lu](\varphi).
\end{equation*}
This shows that \( u\in E(L;L^{p}(\Omega)) \).
\end{proof}

\begin{lemma}[{\cite[Theorem 18.40]{leoni17}}]\label{lemma-trace}
Let \( 1<p<\infty \), \( n\geq 2 \) and \( \Omega\subset\mathbb{R}^{n} \) be an open bounded set with Lipschitz boundary \( \partial\Omega \).
Then any function \( u\in W^{1,p}(\Omega) \) satisfies
\begin{equation*}
	\norm{u}_{W^{1-1/p,p}(\partial\Omega)}\leq C\norm{u}_{W^{1,p}(\Omega)}
\end{equation*}
for some \( C>0 \) independent of \( u \).
Moreover, the Sobolev trace \( W^{1,p}(\Omega)\to W^{1-1/p,p}(\partial\Omega) \) has a bounded right inverse.
In other words, for any \( g\in W^{1-1/p,p}(\partial\Omega) \) there exists \( u\in W^{1,p}(\Omega) \) with \( u\vert_{\partial\Omega} = g \) such that
\begin{equation*}
	\norm{u}_{W^{1,p}(\Omega)}\leq C'\norm{g}_{W^{1-1/p,p}(\partial\Omega)}
\end{equation*}
for some \( C'>0 \) independent of \( g \) and \( u \).
\end{lemma}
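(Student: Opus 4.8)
The plan is to reduce the statement to the model case of the upper half-space $\mathbb{R}^{n}_{+}=\{x\in\mathbb{R}^{n}\colon x_{n}>0\}$ by the standard localization-and-flattening argument, and then to establish (or simply cite) the classical half-space trace and extension theorems. First I would fix a finite atlas of the Lipschitz boundary: cover $\partial\Omega$ by open sets $U_{1},\dots,U_{M}$ such that, after a rigid motion, $\Omega\cap U_{j}$ is the region below the graph of a Lipschitz function $\gamma_{j}\colon\mathbb{R}^{n-1}\to\mathbb{R}$, together with a subordinate partition of unity $\{\chi_{j}\}$ and an interior cutoff $\chi_{0}\in C_{c}^{\infty}(\Omega)$. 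The bi-Lipschitz map $\Psi_{j}(y',y_{n})=(y',\,y_{n}-\gamma_{j}(y'))$ straightens the corresponding boundary piece onto a portion of $\{y_{n}=0\}$; since $\Psi_{j}$ and $\Psi_{j}^{-1}$ are Lipschitz with Jacobians bounded above and below by constants depending only on $\mathrm{Lip}(\gamma_{j})$, composition with $\Psi_{j}^{\pm 1}$ preserves $W^{1,p}$ up to constants (chain rule) and $W^{1-1/p,p}$ up to constants, because $\abs{x-y}$ and $\abs{\Psi_{j}(x)-\Psi_{j}(y)}$ are comparable on the flattened piece so the Gagliardo double integral transforms with a bounded factor. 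Since multiplication by a fixed smooth cutoff is bounded on both $W^{1,p}(\Omega)$ and $W^{1-1/p,p}(\partial\Omega)$, this reduces everything to: the trace operator $T\colon W^{1,p}(\mathbb{R}^{n}_{+})\to W^{1-1/p,p}(\mathbb{R}^{n-1})$, $Tu=u(\cdot,0)$, is bounded and has a bounded right inverse.

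For boundedness of $T$, I would first prove the estimate for $u\in C_{c}^{\infty}(\overline{\mathbb{R}^{n}_{+}})$ and then pass to general $u\in W^{1,p}$ by density of smooth functions, which holds on Lipschitz domains (for instance via a Sobolev extension operator or the segment property). The $L^{p}(\mathbb{R}^{n-1})$ bound for $Tu$ follows from the fundamental theorem of calculus together with Hölder's inequality. The seminorm bound
\begin{equation*}
	\int_{\mathbb{R}^{n-1}}\int_{\mathbb{R}^{n-1}}\frac{\abs{Tu(x')-Tu(y')}^{p}}{\abs{x'-y'}^{\,n-1+(1-1/p)p}}\,dx'\,dy' \leq C\int_{\mathbb{R}^{n}_{+}}\abs{\nabla u}^{p}\,dx
\end{equation*}
is the technical heart: setting $r=\abs{x'-y'}$ and inserting the intermediate values $u(x',r)$ and $u(y',r)$, one estimates the two normal increments by $\int_{0}^{r}\abs{\partial_{n}u}\,dt$ and the tangential increment by $r\int_{0}^{1}\abs{\nabla'u(x'+s(y'-x'),r)}\,ds$, raises to the $p$-th power, divides by $r^{\,n-1+(1-1/p)p}$, and integrates, applying Fubini together with Minkowski's integral inequality; the exponent is exactly the one that makes the resulting radial integrals converge.

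For the bounded right inverse, I would use the Gagliardo-type extension by tangential mollification at scale equal to the height. Fixing $\phi\in C_{c}^{\infty}(\mathbb{R}^{n-1})$ with $\int\phi=1$, set $Eg(x',x_{n})\coloneqq\int_{\mathbb{R}^{n-1}}\phi(z)\,g(x'-x_{n}z)\,dz$ for $x_{n}>0$. Then $Eg(\cdot,x_{n})\to g$ in $L^{p}(\mathbb{R}^{n-1})$ as $x_{n}\to 0^{+}$, which gives the trace identity $TEg=g$. A direct computation (differentiating under the integral sign and changing variables) shows that each component of $x_{n}\nabla Eg(x',x_{n})$ is of the form $\int\rho(z)\,g(x'-x_{n}z)\,dz$ with $\rho\in C_{c}^{\infty}(\mathbb{R}^{n-1})$ satisfying $\int\rho=0$ (one takes $\rho=\partial_{k}\phi$ in the tangential directions and $\rho=-\nabla_{z}\!\cdot(\phi(z)z)$ in the normal one), so that $x_{n}\nabla Eg=\int\rho(z)\,[g(x'-x_{n}z)-g(x')]\,dz$ is a weighted average of difference quotients of $g$ at scale $x_{n}$; Minkowski's inequality and Fubini then yield $\norm{\nabla Eg}_{L^{p}(\mathbb{R}^{n}_{+})}\leq C\,\seminorm{g}_{W^{1-1/p,p}(\mathbb{R}^{n-1})}$. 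Multiplying by a cutoff $\eta(x_{n})$ with $\eta\equiv1$ near $0$ (so that $Eg$ itself lies in $L^{p}(\mathbb{R}^{n}_{+})$, which requires cutting off large $x_{n}$), transporting back by $\Psi_{j}^{-1}$, and patching the chart-wise extensions with the partition of unity produces the global bounded right inverse.

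The step I expect to be the main obstacle is not any individual estimate but the bookkeeping of the Lipschitz localization: one must check carefully that the bi-Lipschitz flattening genuinely preserves the Slobodeckij seminorm up to constants (via comparability of $\abs{x-y}$ with $\abs{\Psi_{j}(x)-\Psi_{j}(y)}$ and a Jacobian change of variables on the double integral) and that no uncontrollable terms arise when reassembling the pieces with the partition of unity. Since all of this is classical, for the present paper it suffices to record it as Theorem 18.40 in \cite{leoni17}, where it is carried out in the equivalent Besov-space setting $B^{s}_{p,p}(\partial\Omega)$.
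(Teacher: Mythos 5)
Your proposal is a correct sketch of the classical proof, but the paper itself offers no argument at all: its ``proof'' is the single line ``See Theorem 18.40 in \cite{leoni17}.'' So there is no substantive paper-side argument to compare against. What you have written out is, in essence, exactly the argument carried out in that reference (localization to a finite Lipschitz atlas, bi-Lipschitz flattening which preserves both \( W^{1,p} \) and the Slobodeckij seminorm up to constants, the half-space trace estimate via the fundamental theorem of calculus and insertion of intermediate values into the Gagliardo double integral, and the Gagliardo extension by tangential mollification at scale \( x_{n} \) for the bounded right inverse). Your closing sentence correctly identifies that in the context of this paper a citation is the appropriate level of detail, which is exactly what the author does.

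Two small points worth flagging if you were to flesh this out fully. First, your flattening map \( \Psi_{j}(y',y_{n})=(y',y_{n}-\gamma_{j}(y')) \) sends the region \emph{below} the graph to the \emph{lower} half-space, whereas you set up \( \mathbb{R}^{n}_{+}=\{x_{n}>0\} \) as the model domain; this is only a sign convention, but it should be made consistent. Second, for the density step (passing from \( C_{c}^{\infty}(\overline{\mathbb{R}^{n}_{+}}) \) to general \( W^{1,p} \)) you invoke either an extension operator or the segment property on Lipschitz domains; both work, but it is worth noting that once you have already localized and flattened, density of smooth functions up to the flat boundary of the half-space is immediate by translation and mollification, so no separate extension theorem is needed at that stage. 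Neither issue is a gap; the proposal is sound.
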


\begin{lemma}[Conormal derivative]\label{lemma-conormal}
Let \( 1<p<\infty \), \( n\geq 2 \) and \( \Omega,\partial\Omega \) satisfy \ref{assumption-1} and \( \Gamma\subset\partial\Omega \) be a nonempty open subset.
Assume that \( \gamma \) satisfies \ref{assumption-3}.
Let \( L\colon W^{1,p}(\Omega)\to W^{1,p'}(\Omega)^{*} \) be the operator corresponding to \( -\operatorname{div}(\gamma\nabla u) \),
\begin{equation*}
	[Lu](\varphi) = \int_{\Omega}\gamma\nabla u\cdot\nabla \varphi\,dx.
\end{equation*}
The notion of conormal derivative can be extended from \( W^{2,p}(\Omega) \) to \( E(L; L^{p}(\Omega)) \) in the sense that if \( u\in E(L;L^{p}(\Omega)) \) then there is a unique functional in \( W^{1-1/p',p'}(\partial\Omega)^{*} \) denoted by \( \partial_{\nu,\gamma}u \) such that
\begin{equation}\label{eq-conormal-1}
	\inner{\partial_{\nu,\gamma}u}{\varphi} = \int_{\Omega}\gamma\nabla u\cdot\nabla e_{\varphi}-fe_{\varphi}\,dx
\end{equation}
where \( e_{\varphi}\in W^{1,p'}(\Omega) \) is any extension of \( \varphi \) into \( \Omega \) and \( f\in L^{p}(\Omega) \) is the function such that \( Lu = f \) in the sense of distributions.
\end{lemma}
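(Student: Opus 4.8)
The plan is to take \eqref{eq-conormal-1} as the \emph{definition} of the functional $\partial_{\nu,\gamma}u$ and then check, in turn, that its right-hand side is independent of the chosen extension $e_{\varphi}$, that it is linear and bounded in $\varphi$, that the resulting functional is the unique one satisfying \eqref{eq-conormal-1}, and finally that it reduces to the classical conormal derivative on $W^{2,p}(\Omega)$. To begin, fix $u\in E(L;L^{p}(\Omega))$ together with the function $f\in L^{p}(\Omega)$ for which $Lu=f$ in the distributional sense. Since $\gamma\nabla u\in L^{p}(\Omega)$ and $f\in L^{p}(\Omega)$, both maps $\varphi\mapsto\int_{\Omega}\gamma\nabla u\cdot\nabla\varphi\,dx$ and $\varphi\mapsto\int_{\Omega}f\varphi\,dx$ are continuous on $W^{1,p'}(\Omega)$ by Hölder's inequality, so the distributional identity $\int_{\Omega}\gamma\nabla u\cdot\nabla\varphi\,dx=\int_{\Omega}f\varphi\,dx$, which holds for $\varphi\in C_{c}^{\infty}(\Omega)$, extends by density to all $\varphi\in W_{0}^{1,p'}(\Omega)$.

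Next I would establish well-definedness. If $e_{\varphi}^{(1)},e_{\varphi}^{(2)}\in W^{1,p'}(\Omega)$ are two extensions of a boundary function $\varphi\in W^{1-1/p',p'}(\partial\Omega)$, then their difference has vanishing Sobolev trace and hence lies in $W_{0}^{1,p'}(\Omega)$; feeding it into the extended identity above shows that the expression
\[
	\int_{\Omega}\gamma\nabla u\cdot\nabla e_{\varphi}\,dx-\inner{f}{e_{\varphi}}
\]
takes the same value for both extensions. Linearity in $\varphi$ is then immediate, since a sum or scalar multiple of extensions is again an extension of the sum or scalar multiple, so the right-hand side of \eqref{eq-conormal-1} is a well-defined linear functional of $\varphi$; call it $\Lambda$. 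For boundedness, apply Lemma \ref{lemma-trace} with $p$ replaced by $p'$ to choose $e_{\varphi}$ as the image of $\varphi$ under a bounded right inverse of the trace, so that $\norm{e_{\varphi}}_{W^{1,p'}(\Omega)}\leq C\norm{\varphi}_{W^{1-1/p',p'}(\partial\Omega)}$; Hölder's inequality then yields $\abs{\Lambda(\varphi)}\leq C\norm{u}_{E(L;L^{p}(\Omega))}\norm{\varphi}_{W^{1-1/p',p'}(\partial\Omega)}$, so $\Lambda\in W^{1-1/p',p'}(\partial\Omega)^{*}$. Setting $\partial_{\nu,\gamma}u\coloneqq\Lambda$ gives existence, and uniqueness is trivial since any functional satisfying \eqref{eq-conormal-1} for all $\varphi$ (and every extension) must equal $\Lambda$.

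It remains to see that this extends the classical notion. If $u\in W^{2,p}(\Omega)$, then Lipschitz continuity of $\gamma$ ensures $\operatorname{div}(\gamma\nabla u)\in L^{p}(\Omega)$, so $u\in E(L;L^{p}(\Omega))$ with $f=-\operatorname{div}(\gamma\nabla u)$; the classical Green identity
\[
	\int_{\partial\Omega}\bigl((\gamma\nabla u)\cdot\nu\bigr)\varphi\,ds=\int_{\Omega}\gamma\nabla u\cdot\nabla e_{\varphi}\,dx+\int_{\Omega}\operatorname{div}(\gamma\nabla u)\,e_{\varphi}\,dx
\]
is precisely \eqref{eq-conormal-1}, so by uniqueness the functional $\partial_{\nu,\gamma}u$ agrees with the classical conormal trace of $u$, and the construction genuinely extends it.

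The only step needing real care is the extension-independence, which relies on the identification of $W_{0}^{1,p'}(\Omega)$ with the kernel of the Sobolev trace on a Lipschitz domain (equivalently, density of $C_{c}^{\infty}(\Omega)$ in that kernel); everything else is an application of Hölder's inequality together with the bounded right inverse from Lemma \ref{lemma-trace}, and I do not expect any serious difficulty beyond bookkeeping with the dual space $W^{1-1/p',p'}(\partial\Omega)^{*}$.
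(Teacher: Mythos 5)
Your proposal is correct and follows essentially the same route as the paper: establish the identity $\int_{\Omega}\gamma\nabla u\cdot\nabla\varphi\,dx=\inner{f}{\varphi}$ on $W_{0}^{1,p'}(\Omega)$ by density from $C_{c}^{\infty}(\Omega)$, use the fact that two extensions of a boundary function differ by an element of $W_{0}^{1,p'}(\Omega)$ (the trace-kernel identification on Lipschitz domains) to get extension-independence, invoke the bounded right inverse from Lemma \ref{lemma-trace} for continuity, and verify consistency with the classical conormal derivative via Green's identity. The paper additionally remarks in passing that $u$ solves a well-posed Dirichlet problem and cites Grisvard for its uniqueness, but that observation plays no role in the rest of the argument, so omitting it costs nothing.
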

\begin{proof}
Since \( u\in W^{1,p}(\Omega) \), we can integrate by parts and use the assumption that \( -\operatorname{div}(\gamma\nabla u) = f \) in the sense of distributions to get
\begin{equation*}
	\int_{\Omega}\gamma\nabla u\cdot\nabla\varphi\,dx = \int_{\Omega}u[-\operatorname{div}(\gamma\nabla\varphi)]\,dx = \int_{\Omega}f\varphi\,dx\quad\forall \varphi\in C_{c}^{\infty}(\Omega).
\end{equation*}
The first and last expressions belong to \( W^{1,p'}(\Omega)^{*} \).
By taking limits in \( W^{1,p'}(\Omega) \) we have
\begin{equation}\label{eq-conormal-2}
	\int_{\Omega}\gamma\nabla u\cdot\nabla\varphi\,dx = \int_{\Omega}f\varphi\,dx\quad\forall \varphi\in W^{1,p'}_{0}(\Omega).
\end{equation}
In other words, \( u \) is a weak solution of the Dirichlet problem
\begin{equation*}
\begin{cases}
	-\operatorname{div}(\gamma\nabla u) = f&\text{in }\Omega,\\
	u = g&\text{on }\partial\Omega,
\end{cases}
\end{equation*}
for \( g \coloneqq u\vert_{\partial\Omega}\in W^{1-1/p,p}(\partial\Omega) \).

Clearly the right-hand side of \eqref{eq-conormal-1} makes sense as a functional on \( e_{\varphi}\in W^{1,p'}(\Omega) \).
Let \( e_{\varphi},\tilde{e}_{\varphi}\in W^{1,p'}(\Omega) \) be two extensions of \( \varphi\in W^{1-1/p',p'}(\partial\Omega) \).
Then \( e_{\varphi}-\tilde{e}_{\varphi}\in W^{1,p'}_{0}(\Omega) \) and it follows from \eqref{eq-conormal-2} that
\begin{equation*}
	\int_{\Gamma}\gamma\nabla u\cdot\nabla(e_{\varphi}-\tilde{e}_{\varphi})\,dx - \int_{\Omega}f(e_{\varphi}-\tilde{e}_{\varphi})\,dx = 0,
\end{equation*}
which shows that \eqref{eq-conormal-1} is independent of the extension.
Boundedness follows by taking the extension \( e_{\varphi} \) to be given by the bounded right-inverse of the trace  (Lemma \ref{lemma-trace}).

Note that for \( u\in W^{2,p}(\Omega) \) the conormal derivative \( \partial_{\nu,\gamma}u \) exists as a function in \( W^{1-1/p,p}(\partial\Omega) \) and integration against this function gives a functional in \( W^{1-1/p',p'}(\partial\Omega)^{*} \),
\begin{equation*}
	\inner{\partial_{\nu,\gamma}u}{\varphi} = \int_{\partial\Omega}(\partial_{\nu,\gamma}u)\varphi\,ds.
\end{equation*}
The equality \( f = -\operatorname{div}(\gamma\nabla u) \) holds in \( L^{p}(\Omega) \), not only in the sense of distributions.
Integration by parts yields \eqref{eq-conormal-1}, which justifies the name conormal derivative.
\end{proof}

\begin{lemma}[Unique continuation from Cauchy data]\label{lemma-ucp}
Let \( n\geq 2 \) and \( \Omega,\partial\Omega \) satisfy \ref{assumption-1} and \( \Gamma\subset\partial\Omega \) be a nonempty open subset.
Assume that \( \gamma \) satisfies \ref{assumption-3}.
Let \( 1<p<\infty \).
If \( u\in W^{1,p}(\Omega) \) satisfies
\begin{enumerate}
	\item\label{ucp-1} \( -\operatorname{div}(\gamma\nabla u) = 0 \) in \( \Omega \) in the sense of distributions,
	\item\label{ucp-2} \( u\vert_{\Gamma} = 0 \),
	\item\label{ucp-3} \( \partial_{\nu,\gamma}u\vert_{\Gamma} = 0 \) in the sense of \eqref{eq-functional-restriction},
\end{enumerate}
then \( u = 0 \) on \( \overline{\Omega} \).
\end{lemma}
\begin{proof}
Since \( \gamma_{ij}\in C^{0,1}(\overline{\Omega}) \) are Lipschitz, they can be extended to \( \mathbb{R}^{n} \) as Lipschitz functions \( \tilde{\gamma}_{ij}\in C^{0,1}(\mathbb{R}^{n}) \).
We wish to extend the equation into a slightly larger domain.
Since \( \partial\Omega \) is \( C^{1} \), there exists a function \( \phi\colon\mathbb{R}^{n}\to\mathbb{R} \) such that
\begin{equation*}
	\Omega = \{x\in\mathbb{R}^{n}\colon \phi(x)>0\}, \quad\partial\Omega = \{x\in\mathbb{R}^{n}\colon \phi(x) = 0\},
\end{equation*}
and such that \( \nabla\phi\neq 0 \) on \( \partial\Omega \).
Let \( x_{0}\in \Gamma \) and \( \mathcal{U}\subset\mathbb{R}^{n} \) be a neighborhood of \( x_{0} \) such that \( \nabla\phi\neq 0 \) in \( \mathcal{U} \).
Let \( \mathcal{V}\subset\mathbb{R}^{n} \) be a neighborhood of \( \Omega \) in which \( \tilde{\gamma} \) is uniformly elliptic.
Let \( \mathcal{W}\subset\subset\mathcal{Q}\subset\subset\mathcal{U}\cap\mathcal{V} \) be two neighborhoods of \( x_{0} \) and take \( \psi\in C_{c}^{\infty}(\mathbb{R}^{n}) \) satisfying \( 0\leq\psi\leq 1 \), \( \operatorname{supp}(\psi)\subseteq \mathcal{Q} \), and \( \psi\vert_{\mathcal{W}} = 1 \).
Let \( \varepsilon = \inf_{x\in\mathcal{U}}\{\abs{\nabla\phi (x)}\} \) and \( \delta = \sup_{x\in\mathcal{U}}\{\abs{\nabla\psi(x)}\} \).
Then \( \nabla(\phi+\frac{\varepsilon}{2\delta}\psi)\neq 0 \) in \( \mathcal{U} \).
Define
\begin{equation*}
	\Omega_{0} = \{x\in\mathbb{R}^{n}\colon \phi(x)+\frac{\varepsilon}{2\delta}\psi(x) > 0 \}.
\end{equation*}
Since \( \phi = 0 \) on \( \partial\Omega \) and \( \psi\vert_{\mathcal{W}} = 1 \), it follows that \( \phi(x)+\frac{\varepsilon}{2\delta}\psi(x)>0 \) in a neighborhood of \( x_{0} \).
Hence \( \Omega\subseteq \Omega_{0} \).
Additionally, \( \nabla(\phi+\frac{\varepsilon}{2\delta}\psi)\neq 0 \) on \( \partial\Omega_{0} \), and \( \Omega_{0} \) is therefore a \( C^{1} \) domain.
Note that \( \Omega_{0} \) is connected, \( \partial\Omega\cap\Omega_{0}\subseteq\Gamma \), and the operator \( -\operatorname{div}(\tilde{\gamma}\nabla u) \) is uniformly elliptic in \( \Omega_{0} \).

Extend the solution \( u \) to
\begin{equation*}
	v(x) =
\begin{cases}
	u(x)&x\in\Omega,\\
	0&x\in\Omega_{0}\setminus\Omega.
\end{cases}
\end{equation*}
Then \( v\in W^{1,p}(\Omega_{0}) \) since \( u\vert_{\Gamma} = 0 \).
We claim that \( v \) satisfies
\begin{equation}\label{ucp-4}
	\int_{\Omega_{0}}\tilde{\gamma}\nabla v\cdot \nabla \varphi\,dx = 0\quad\forall\varphi\in C_{c}^{\infty}(\Omega_{0}).
\end{equation}
If the test function \( \varphi \) is supported in \( \Omega \) then \eqref{ucp-4} hold because of \eqref{ucp-1}.
When \( \varphi \) is supported in \( \Omega_{0}\setminus\Omega \) then \eqref{ucp-4} is true since \( v\vert_{\Omega_{0}\setminus\Omega} = 0 \).
If the support of \( \varphi \) intersects the boundary section \( \Gamma \) then \( \psi = \varphi\vert_{\overline{\Omega}} \) belong to \( C^{\infty}(\overline{\Omega}) \) and has \( \operatorname{supp}(\psi\vert_{\partial\Omega})\subseteq\Gamma \).
Then it follows from \eqref{ucp-3} that
\begin{equation*}
	\int_{\Omega_{0}}\tilde{\gamma}\nabla v\cdot\nabla\varphi\,dx = \int_{\Omega}\gamma\nabla u\cdot\nabla\psi\,dx = \inner{\partial_{\nu,\gamma}u}{\psi} = 0.
\end{equation*}
So \eqref{ucp-4} is indeed valid.
Note that when \( p<2 \), interior regularity \cite[Proposition A.1]{jls17} implies that \( v\in W^{1,2}_{loc}(\Omega_{0}) \).
It now follows from Theorem 2.4 in \cite{hörmander83} that \( v = 0 \) on \( \overline{\Omega}_{0} \) and hence also \( u = 0 \) on \( \overline{\Omega} \).
\end{proof}

\begin{lemma}[Friedrichs' inequality]\label{lemma-friedrich}
Let \( \Omega\subset \mathbb{R}^{n} \), \( n\geq 2 \), be a bounded open set with Lipschitz boundary \( \partial\Omega \) and let \( \Gamma\subset \partial\Omega \) be a subset with positive measure.
Then there exists a constant \( C > 0 \) depending only on \( \Omega \) such that any \( u \in W^{1,2}(\Omega) \) satisfies
\begin{equation*}
	\norm{u}_{L^{2}(\Omega)} \leq C(\norm{\nabla u}_{L^{2}(\Omega)} + \norm{u}_{L^{2}(\Gamma)}).
\end{equation*}
\end{lemma}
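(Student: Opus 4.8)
The plan is to argue by contradiction via a compactness argument of Poincaré–Rellich type. Suppose the asserted inequality fails for every choice of constant. Then for each \( k\in\mathbb{N} \) there is a function \( u_{k}\in W^{1,2}(\Omega) \) with
\begin{equation*}
	\norm{u_{k}}_{L^{2}(\Omega)} > k\big(\norm{\nabla u_{k}}_{L^{2}(\Omega)} + \norm{u_{k}}_{L^{2}(\Gamma)}\big).
\end{equation*}
After rescaling each \( u_{k} \) we may assume \( \norm{u_{k}}_{L^{2}(\Omega)} = 1 \), and then \( \norm{\nabla u_{k}}_{L^{2}(\Omega)} + \norm{u_{k}}_{L^{2}(\Gamma)} < 1/k \to 0 \) as \( k\to\infty \). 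In particular the sequence \( (u_{k}) \) is bounded in \( W^{1,2}(\Omega) \).

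Next I would pass to a limit. Since \( \Omega \) is a bounded Lipschitz domain, the Rellich–Kondrachov theorem gives a compact embedding \( W^{1,2}(\Omega)\hookrightarrow L^{2}(\Omega) \), so along a subsequence \( u_{k}\to u \) strongly in \( L^{2}(\Omega) \). Because \( \nabla u_{k}\to 0 \) in \( L^{2}(\Omega) \), the differences \( u_{k}-u_{j} \) are Cauchy in the full \( W^{1,2}(\Omega) \) norm, so in fact \( u_{k}\to u \) in \( W^{1,2}(\Omega) \) and \( \nabla u = 0 \) a.e. Hence \( u \) is constant on each connected component of \( \Omega \); reading \( \Omega \) as connected (consistently with \ref{assumption-1}), we conclude \( u\equiv c \) for some constant \( c \).

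It then remains to identify \( c \) and extract a contradiction. By the trace theorem (Lemma \ref{lemma-trace} with \( p=2 \), followed by the continuous inclusion \( W^{1/2,2}(\partial\Omega)\hookrightarrow L^{2}(\partial\Omega) \) valid since \( \partial\Omega \) has finite measure), the trace map \( W^{1,2}(\Omega)\to L^{2}(\partial\Omega) \) is bounded; restricting to \( \Gamma \) and using \( u_{k}\to u \) in \( W^{1,2}(\Omega) \) gives \( u_{k}\vert_{\Gamma}\to u\vert_{\Gamma} = c \) in \( L^{2}(\Gamma) \). On the other hand \( \norm{u_{k}}_{L^{2}(\Gamma)}\to 0 \), so \( \norm{c}_{L^{2}(\Gamma)} = 0 \), and since \( \Gamma \) has positive measure this forces \( c = 0 \), i.e. \( u\equiv 0 \). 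But \( \norm{u}_{L^{2}(\Omega)} = \lim_{k}\norm{u_{k}}_{L^{2}(\Omega)} = 1 \), a contradiction. Hence the inequality holds for some finite \( C \), and since nothing in the construction depends on anything but \( \Omega \) and \( \Gamma \), neither does \( C \).

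The only genuinely nontrivial inputs are the compact Sobolev embedding and the boundedness of the trace operator on a Lipschitz domain — both standard — so I do not expect a real obstacle; the compactness step is the one to state with a little care, together with the minor bookkeeping point that the bare phrase ``bounded open set'' should be read with \( \Omega \) connected (as in \ref{assumption-1}) for the conclusion \( u\equiv c \) to make sense, otherwise one must additionally require \( \Gamma \) to meet the boundary of each component in positive measure and argue componentwise.
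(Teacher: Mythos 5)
Your proof is correct and takes essentially the same route as the paper: a contradiction argument normalizing $\norm{u_k}_{L^2(\Omega)}=1$, extracting a convergent subsequence via Rellich, concluding $\nabla u = 0$ hence $u$ constant, then using the trace to force $u=0$, contradicting $\norm{u}_{L^2(\Omega)}=1$. The only cosmetic difference is that you upgrade to strong $W^{1,2}$ convergence and invoke continuity of the trace, while the paper stays with weak $W^{1,2}$ convergence and compactness of the trace; your added remark on connectedness of $\Omega$ is a fair point but is covered by the paper's running assumption \ref{assumption-1}.
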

\begin{proof}
The proof is by contradiction, similar to the proof of the standard Poincaré inequality \cite[Theorem 1, Chapter 5.8]{evans10}. Suppose to the contrary that for every \( n\in\mathbb{N} \) there exists \( u_n\in W^{1,2}(\Omega) \) such that
\begin{equation}\label{eq-friedrich-1}
	\norm{\nabla u_n}_{L^{2}(\Omega)} + \norm{u_n}_{L^{2}(\Gamma)} \leq \frac{1}{n}\norm{u_n}_{L^{2}(\Omega)}.
\end{equation}
If we normalize the sequence \( u_n \) so that \( \norm{u_n}_{L^2(\Omega)} = 1 \) then the sequence is bounded in \( W^{1,2}(\Omega) \) and therefore has a subsequence (also denoted \( u_{n} \)) which converges weakly in \( W^{1,2}(\Omega) \) to some function \( u\in W^{1,2}(\Omega) \).
From \eqref{eq-friedrich-1} it follows that the partial derivatives \( \partial_{\alpha}u_{n} \) converge strongly in \( L^{2}(\Omega) \) to \( 0 \), and hence also converges weakly to \( 0 \).
By uniqueness of weak limit in \( W^{1,2}(\Omega) \) it follows that \( \nabla u = 0 \) and \( u \) is therefore constant.
Next, by compactness, \( u_{n}\vert_{\Gamma} \) converges strongly to \( u\vert_{\Gamma} \) in \( L^{2}(\Gamma) \).
From \eqref{eq-friedrich-1} we see that \( u_{n}\vert_{\Gamma} \) converges to \( 0 \).
By uniqueness of limit, \( u\vert_{\Gamma} = 0 \) and then \( u = 0 \) on \( \Omega \) since it is constant.
Finally, \( u_{n} \) converges strongly to \( u \) in \( L^{2}(\Omega) \) by compactness and since \( \norm{u_{n}}_{L^{2}(\Omega)} = 1 \) we also have \( \norm{u}_{L^{2}(\Omega)} = 1 \), but this is a contradiction.
\end{proof}

\begin{lemma}[Boundary regularity]\label{lemma-boundary-regularity}
Let \( \Omega\subset \mathbb{R}^{n} \), \( n\geq 2 \), be a bounded open set with \( C^{1} \) boundary \( \partial\Omega \) and let \( \Gamma\subset \partial\Omega \) be a subset with positive measure.
Assume that \( \gamma \) satisfies \ref{assumption-3} and \( q\in L^{\infty}(\Gamma) \).
Let \( F\in W^{1,p'}(\Omega)^{*} \), \( p\geq 2 \), and suppose that \( u\in W^{1,2}(\Omega) \) satisfies
\begin{equation*}
	[Lu](\varphi) = \int_{\Omega}\gamma\nabla u\cdot\nabla\varphi\,dx + \int_{\Gamma}qu\varphi\,ds = F(\varphi)\quad\varphi\in W^{1,p'}(\Omega).
\end{equation*}
Then \( u \in W^{1,p}(\Omega) \).
\end{lemma}
\begin{proof}
We establish the result by a reduction to interior regularity \cite[Proposition A.1]{jls17}, by straightening the boundary and reflecting across the straight part of the boundary.
~\newline
{\bfseries Step 1 - Localization:}
Let \( \{B_{x_{i}}\}_{i=1}^{N} \) be an open cover of \( \partial\Omega \) of small enough sets that each boundary section \( \partial\Omega\cap B_{x_{i}} \) can be flattened.
Let \( \{ \psi_{i} \}_{i=1}^{N} \) be a smooth partition of unity subordinate to \( \{B_{x_{i}}\}_{i=1}^{N} \).
We show that \( \psi_{i}u\in W^{1,p}(\Omega) \) for each \( i \), from which the same conclusion follows for \( u = \sum_{i=1}^{N}\psi_{i}u \).
By using the test function \( \varphi\psi_{i} \) for an arbitrary \( \varphi \) we obtain the equation for \( \psi_{i}u \),
\begin{equation}\label{eq-boundary-regularity-1}
	\int_{\Omega}\gamma\nabla(\psi_{i}u)\cdot\nabla \varphi\,dx = \inner{G}{\varphi}\quad\forall\varphi\in W^{1,2}(\Omega)
\end{equation}
where
\begin{equation}\label{eq-robin-linear-regularity-2}
\begin{aligned}
	\inner{G}{\varphi} &\coloneqq \inner{F}{\psi_{i}\varphi} - \int_{\Gamma}q\psi_{i}u\varphi\,ds+ \int_{\Omega}\gamma u\nabla\psi_{i}\cdot\nabla\varphi - \gamma\varphi\nabla u\cdot\nabla\psi_{i}\,dx.
\end{aligned}
\end{equation}
~\newline
{\bfseries Step 2 - Flattening the boundary:}
Since the boundary is \( C^{1} \) there exists coordinates \( \Phi\colon B_{x_{i}}\cap\Omega\to V_{i}\in\mathbb{R}^{n} \) belonging to \( C^{1} \) which flattens the boundary.
From the precise definition of these coordinates (see e.g. \cite[Appendix C]{evans10}) it follows that
\begin{equation*}
	\operatorname{det}(D\Phi) = \operatorname{det}(D[\Phi^{-1}]) = 1.
\end{equation*}
We apply the change of variables \( x\mapsto\Phi(x) \) to \eqref{eq-boundary-regularity-1} in order to obtain an equation for \( v\coloneqq (\psi_{i}u)\circ \Phi^{-1} \).
First note that
\begin{equation*}
	\partial_{i}[v\circ\Phi](x) = \sum_{k}\partial_{k}v(y)\vert_{y=\Phi(x)}\partial_{i}\Phi_{k}(x).
\end{equation*}
The terms \( \partial_{i}\Phi_{k}(x) \) are incorporated into the coefficient \( a(x) = a_{ij}(x) \) in the transformed domain,
\begin{equation*}
	a_{ij}(x) = \sum_{k}\sum_{l}[\gamma_{kl}(y)\partial_{l}\Phi_{j}(y)\partial_{k}\Phi_{i}(y)]\vert_{y=\Phi^{-1}(x)}
\end{equation*}
Then
\begin{equation*}
	\int_{B_{x_{i}}\cap\Omega}\gamma\nabla (\psi_{i}u)\cdot\nabla\varphi\,dx = \int_{V_{i}\cap\mathbb{R}^{n}_{+}}a_{ij}(x)\nabla v\cdot\nabla \tilde{\varphi}\,dx,
\end{equation*}
In a similar way, the functional \( G \) is transformed into a functional on \( W^{1,p'}(V_{i}\cap\mathbb{R}^{n}_{+}) \).
The equation \eqref{eq-boundary-regularity-1} is transformed into
\begin{equation}\label{eq-robin-linear-regularity-1}
	\int_{V_{i}\cap\mathbb{R}^{n}_{+}} a_{ij}(x) \nabla v\cdot\nabla \varphi\,dx = \inner{G}{\varphi}\quad\forall\varphi\in W^{1,p'}(V_{i}\cap\mathbb{R}^{n}_{+}).
\end{equation}

The coefficients \( a_{ij} \) are uniformly continuous.
In order to apply elliptic regularity, we additionally need to verify uniform ellipticity.
Let \( \xi\in\mathbb{R}^{n} \) and \( \eta^{T}=\xi^{T}D\Phi \).
Then \( \eta_{i}=\sum_{k}\partial_{i}\Phi_{k}\xi_{k} \) and
\begin{equation*}
\begin{aligned}
	\xi^{T}a(x)\xi &= \sum_{i}\sum_{j}\sum_{k}\sum_{l} [\gamma_{kl}(y)\partial_{l}\Phi_{j}(y)\partial_{k}\Phi_{i}(y)]\vert_{y=\Phi^{-1}(x)}\xi_{i}\xi_{j} \\
	&= \sum_{k}\sum_{l}\gamma_{kl}(y)\vert_{y=\Phi^{-1}(x)}\Big(\sum_{i}\partial_{k}\Phi_{i}(y)\vert_{y=\Phi^{-1}(x)}\xi_{i}\Big)\Big(\sum_{j} \partial_{l}\Phi_{j}(y)\vert_{y=\Phi^{-1}(x)}\xi_{j}\Big) \\
	&= \sum_{k}\sum_{l}\gamma_{kl}(y)\vert_{y=\Phi^{-1}(x)}\eta_{k}\eta_{l}
	\geq C\norm{\eta}_{\mathbb{R}^{n}}^{2}
\end{aligned}
\end{equation*}
From \( \Phi(\Phi^{-1}(x)) = x \) it follows that \( D\Phi D[\Phi^{-1}] = \operatorname{Id} \).
Therefore, \( \xi=\eta^{T}D[\Phi^{-1}] \) and \( \norm{\xi}_{\mathbb{R}^{n}}\leq C\norm{\eta}_{\mathbb{R}^{n}} \).
This implies that \( \xi^{T}a(x)\xi\geq C\norm{\xi}_{\mathbb{R}^{n}}^{2} \).
~\newline
{\bfseries Step 3 - Reflection across the boundary:}
We extend by even reflection every function appearing in \eqref{eq-robin-linear-regularity-1}.
More precisely, let \( \tilde{V}_{i} = \{(x',-x_{n})\colon (x',x_{n})\in V_{i}\cap \mathbb{R}^{n}_{+}\} \) denote the extended domain and extend each function by the formula
\begin{equation*}
	v(x',x_{n}) = v(x',-x_{n}) \quad\text{for }(x',x_{n})\in\tilde{V}_{i}.
\end{equation*}
Note that the coefficient \( a_{ij} \) extended in this fashion is uniformly continuous.
Applying the change of variables \( (x',x_{n})\to(x',-x_{n}) \) in \eqref{eq-robin-linear-regularity-1} leads to the same equation but in \( \tilde{V}_{i} \).
By adding the two equations we get
\begin{equation}\label{eq-boundary-regularity-2}
	\int_{V_{i}\cup \tilde{V}_{i}} a_{ij}(x) \nabla v\cdot\nabla \varphi\,dx = \inner{G}{\varphi}\quad\forall\varphi\in W^{1,p'}(V_{i}\cup \tilde{V}_{i}).
\end{equation}
Note that the operator \( G \) is of the same form as in \eqref{eq-robin-linear-regularity-2}, except that some coefficients are multiplied by the \( L^{\infty} \) functions \( \partial_{k}\Phi_{l} \).
~\newline
{\bfseries Step 4 - Improved regularity:}
We proceed by applying interior regularity \cite[Proposition A.1]{jls17} to \eqref{eq-boundary-regularity-2}.
Let us write the functional \( G \) as
\begin{equation*}
\begin{aligned}
	\inner{G}{\varphi} &\coloneqq \inner{F}{\psi_{i}\varphi} - \int_{\{x_{n}=0\}\cap V_{i}}b\varphi\,ds+ \int_{V_{i}\cup\tilde{V}_{i}}c\cdot\nabla\varphi - d\varphi\,dx.
\end{aligned}
\end{equation*}
where \( b = q\psi_{i}u \), \( c=\gamma u\nabla\psi_{i} \) and \( d = \gamma\nabla u\cdot\nabla\psi_{i} \).
Since \( b \), \( c \) depend linearly on \( u \), we can increase the \( L^{p} \) regularity of \( b \), \( c \) by applying the Sobolev embedding on \( u \).
Since \( d \) depends on \( \nabla u \), we instead increase its regularity by applying the Sobolev embedding to \( \varphi \).
In this way, we prove that \( G \) is a functional in a better space than originally defined.

Let \( p_{1}=2 \).
By the Sobolev embedding, \( u\in L^{s} \) with \( s = \frac{p_{1}n}{n-p_{1}} = \frac{2n}{n-2} \) and \( u\vert_{\{x_{n}=0\}\cap V_{i}}\in L^{r} \) with \( r = p_{1}\frac{n-1}{n-p_{1}} = 2\frac{n-1}{n-2} \).
Integrating \( c\cdot\nabla\varphi \) therefore induces a functional on \( \varphi\in W^{1,s'} \), where \( s' = \frac{np_{1}}{n(p_{1}-1)+p_{1}} = \frac{2n}{n+2} \) is the Hölder conjugate exponent.
Similarly, integrating \( b\varphi \) induces a functional on \( L^{r'}(\{x_{n}=0\}\cap V_{i}) \), which in turn is a functional on \( W^{1,t} \) for any \( t \) such that the trace of \( W^{1,t} \) embeds into \( L^{r'}(\{x_{n}=0\}\cap V_{i}) \).
The smallest such \( t \) ensured by the Sobolev embedding is \( t = \frac{np_{1}}{n(p_{1}-1)+p_{1}} = s' \).
For the term \( d\varphi \), we only have \( d\in L^{2} = L^{p_{1}} \).
We apply the Sobolev embedding to find that integrating \( d\varphi \) induces a functional on \( \varphi\in W^{1,\alpha'} \) where \( 2=p_{1}'=\frac{n\alpha'}{n+\alpha'} \).
By solving for \( \alpha' \) and substituting \( p_{1}'=\frac{p_{1}}{p_{1}-1} \), we find \( \alpha' = \frac{np_{1}'}{n+p_{1}'} = \frac{np_{1}}{n(p_{1}-1)+p_{1}}=s' \).
In other words, \( b,c,d \) all induce functionals on the same space \( W^{1,s'} \).
By invoking interior regularity, we conclude that \( v\in W^{1,p_{2}} \) where \( p_{2}=\min\{s,p\} \).
Repeating this for every localized function \( \psi_{i}u \) yields \( u\in W^{1,p_{2}}(\Omega) \).

Replacing \( p_{1} \) by \( p_{2} \) in the above, we get an increasing sequence \( p_{k} \) given by \( p_{k} = \frac{np_{1}}{n-(k-1)p_{1}} = \frac{2n}{n-(k-1)2} \).
Eventually, either \( p_{k}\geq n \) in which case the Sobolev embedding gives unrestricted exponent \( s \) in \( L^{s} \), or we get \( p_{k}>p \).
In both cases \( G \) is a functional on \( W^{1,p'} \), so that \( \psi_{i}u\in W^{1,p} \) by interior regularity.
Hence, \( u\in W^{1,p}(\Omega) \).
\end{proof}

\begin{lemma}\label{lemma-kernel-basis}
Let \( n\geq 2 \) and \( \Omega,\partial\Omega,\Gamma_{A},\Gamma_{I} \) satisfy \ref{assumption-1} and \ref{assumption-2}.
Assume that \( \gamma \) satisfies \ref{assumption-3} and \( q\in L^{\infty}(\Gamma_{I}) \).
Let \( \mathcal{N},\mathcal{N}_{\Gamma_{A}} \) be given by \eqref{eq-kernels}.
Then \( \psi_{1},\ldots,\psi_{N} \) form a basis of \( \mathcal{N} \) if and only if \( \psi_{1}\vert_{\Gamma_{A}},\ldots,\psi_{N}\vert_{\Gamma_{A}} \) form a basis of \( \mathcal{N}_{\Gamma_{A}} \).
In particular, if \( \mathcal{N} \) is nontrivial then there exists a basis of \( \mathcal{N} \) whose traces onto \( \Gamma_{A} \) are \( L^{2}(\Gamma_{A}) \)-orthonormal.
\begin{proof}
First suppose that \( \{\psi_{1},\ldots, \psi_{N}\} \) is a basis of \( \mathcal{N} \).
Then \( \mathcal{N}_{\Gamma_{A}} \) is a subspace of the linear span of \( \{\psi_{1}\vert_{\Gamma_{A}},\ldots,\psi_{N}\vert_{\Gamma_{A}}\} \).
If \( \mathcal{N}_{\Gamma_{A}} \) is a strict subspace then some \( \psi_{i} \) and \( \psi_{j} \) have the same trace on \( \Gamma_{A} \) and then it follows by Lemma \ref{lemma-ucp} that \( \psi_{i}=\psi_{j} \) on \( \overline{\Omega} \) which contradicts that they are part of a basis of \( \mathcal{N} \).
For the other direction, suppose that \( \psi_{1},\ldots,\psi_{N} \) form a basis of \( \mathcal{N}_{\Gamma_{A}} \).
By definition of \( \mathcal{N}_{\Gamma_{A}} \), \( \psi_{1},\ldots,\psi_{N} \) can be extended to functions in \( \mathcal{N} \).
If \( \psi_{1},\ldots,\psi_{N} \) are not a basis of \( \mathcal{N} \), then there exist a function \( \varphi\in\mathcal{N} \) such that \( \{\varphi,\psi_{1},\ldots,\psi_{N}\} \) are linearly independent in \( \mathcal{N} \).
But since \( \psi_{1},\ldots,\psi_{N} \) form a basis of \( \mathcal{N}_{\Gamma_{A}} \), \( \varphi\vert_{\Gamma_{A}} \) is a linear combination of \( \psi_{1}\vert_{\Gamma_{A}},\ldots,\psi_{N}\vert_{\Gamma_{A}} \).
It follows by Lemma \ref{lemma-ucp} that \( \varphi \) is a linear combination of \( \psi_{1},\ldots,\psi_{N} \) in \( \mathcal{N} \) too.

Finally, by taking a basis of \( \mathcal{N}_{\Gamma_{A}} \) which is \( L^{2}(\Gamma_{A}) \)-orthonormal and applying the first part of the lemma, we obtain a basis of \( \mathcal{N} \) with the desired orthonormality.
\end{proof}
\end{lemma}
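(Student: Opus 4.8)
The plan is to show that the trace map $\mathcal{N}\ni v\mapsto v\vert_{\Gamma_{A}}\in\mathcal{N}_{\Gamma_{A}}$ is a linear isomorphism. Once this is in hand, the claimed equivalence of bases follows immediately: a linear isomorphism carries any basis of $\mathcal{N}$ to a basis of $\mathcal{N}_{\Gamma_{A}}$, and applying the same fact to its inverse pulls any basis of $\mathcal{N}_{\Gamma_{A}}$ back to a basis of $\mathcal{N}$. Surjectivity of the trace map is built into the definition \eqref{eq-kernels} of $\mathcal{N}_{\Gamma_{A}}$, so all the content sits in injectivity, and I would derive that from the unique continuation statement, Lemma \ref{lemma-ucp}. (If $\mathcal{N}$ were infinite dimensional, both sides of the equivalence would be false for every finite list $\psi_{1},\dots,\psi_{N}$; that $\mathcal{N}$ is in fact finite dimensional is needed only for the last assertion and is recorded below.)

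For the injectivity step I would take $v\in\mathcal{N}$ with $v\vert_{\Gamma_{A}}=0$ and show $v\equiv0$. Testing the weak form of $Lv=0$ against $\varphi\in C_{c}^{\infty}(\Omega)$ first yields $-\operatorname{div}(\gamma\nabla v)=0$ in $\Omega$ in the distributional sense, so $v\in E(L;L^{p}(\Omega))$ with right-hand side $f=0$ and the conormal derivative $\partial_{\nu,\gamma}v\in W^{1-1/p',p'}(\partial\Omega)^{*}$ is well defined by Lemma \ref{lemma-conormal}. For any $\varphi\in W^{1-1/p',p'}(\partial\Omega)$ supported in $\Gamma_{A}$ and any extension $e_{\varphi}\in W^{1,p'}(\Omega)$, formula \eqref{eq-conormal-1} with $f=0$ gives $\inner{\partial_{\nu,\gamma}v}{\varphi}=\int_{\Omega}\gamma\nabla v\cdot\nabla e_{\varphi}\,dx$, and inserting $e_{\varphi}$ into the weak form shows this equals $-\int_{\Gamma_{I}}qv\,\varphi\,ds$, which vanishes because $\operatorname{supp}(q)\subseteq\Gamma_{I}$ while $\varphi\vert_{\Gamma_{I}}=0$ (as $\Gamma_{A}\cap\Gamma_{I}=\emptyset$). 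Thus $\partial_{\nu,\gamma}v$ is supported outside $\Gamma_{A}$; combined with $v\vert_{\Gamma_{A}}=0$ and the interior equation, Lemma \ref{lemma-ucp} with $\Gamma=\Gamma_{A}$ forces $v\equiv0$ on $\overline{\Omega}$. Hence the trace map is injective and therefore an isomorphism, which proves the equivalence of bases.

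For the final assertion I would first note that $\mathcal{N}=\operatorname{Ker}(L)$ is finite dimensional: in the proof of Lemma \ref{lemma-linear-solvability} one writes $L=L_{\lambda}(\operatorname{Id}-\lambda L_{\lambda}^{-1}\iota)$ with $\lambda L_{\lambda}^{-1}\iota$ compact, so $\operatorname{Ker}(L)=\operatorname{Ker}(\operatorname{Id}-\lambda L_{\lambda}^{-1}\iota)$ is finite dimensional, and through the trace isomorphism so is $\mathcal{N}_{\Gamma_{A}}$. Then I would run Gram--Schmidt in $L^{2}(\Gamma_{A})$ on any basis of $\mathcal{N}_{\Gamma_{A}}$ to obtain an $L^{2}(\Gamma_{A})$-orthonormal basis, and transport it back through the inverse trace map; by the equivalence just established these preimages form a basis of $\mathcal{N}$ whose traces onto $\Gamma_{A}$ are $L^{2}(\Gamma_{A})$-orthonormal.

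I expect the main obstacle to be the injectivity step, and within it the careful bookkeeping needed to conclude that a kernel element $v$ has conormal derivative vanishing on $\Gamma_{A}$ in the precise functional sense required by Lemma \ref{lemma-ucp}; this is exactly where the disjointness $\Gamma_{A}\cap\Gamma_{I}=\emptyset$ and the localization $\operatorname{supp}(q)\subseteq\Gamma_{I}$ are essential. Once unique continuation is available, the remainder reduces to routine linear algebra and Gram--Schmidt.
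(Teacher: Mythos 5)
Your proof is correct and takes essentially the same route as the paper's: both directions reduce to the unique continuation statement, Lemma \ref{lemma-ucp}, and the final orthonormal basis is obtained by Gram--Schmidt on $\mathcal{N}_{\Gamma_{A}}$. Repackaging the argument as the statement that the trace map $\mathcal{N}\to\mathcal{N}_{\Gamma_{A}}$ is a linear isomorphism is a cleaner way to organize the same idea, and your explicit check (via the weak form, $\operatorname{supp}(q)\subseteq\Gamma_{I}$, and $\Gamma_{A}\cap\Gamma_{I}=\emptyset$) that a kernel element has vanishing conormal derivative on $\Gamma_{A}$ in the sense of Lemma \ref{lemma-conormal} is a worthwhile step that the paper leaves implicit.
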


\begin{lemma}\label{lemma-kernel-projection}
Let \( p, n\geq 2 \) and \( \Omega,\partial\Omega,\Gamma_{A},\Gamma_{I} \) satisfy \ref{assumption-1} and \ref{assumption-2}.
Assume that \( \gamma \) satisfies \ref{assumption-3} and \( q\in L^{\infty}(\Gamma_{I}) \).
Then
\begin{equation}\label{lemma-projection-decompositions}
\begin{aligned}
	L^{p}(\Omega) &= \mathcal{N}\oplus\mathcal{N}^{\perp}, \\
	L^{p}(\Gamma_{A}) &= \mathcal{N}_{\Gamma_{A}}\oplus\mathcal{N}_{\Gamma_{A}}^{\perp_{L}},\\
	W^{1-1/p',p'}_{0}(\partial\Omega,\Gamma_{A})^{*} &= \mathcal{N}_{\Gamma_{A}} \oplus \mathcal{N}_{\Gamma_{A}}^{\perp_{W}}, \\
\end{aligned}
\end{equation}
where \( \mathcal{N}_{\Gamma_{A}}^{\perp_{L}} \), \( \mathcal{N}_{\Gamma_{A}}^{\perp_{W}} \) denotes the subspaces of \( L^{p}(\Gamma_{A}) \), \( W^{1-1/p',p'}_{0}(\partial\Omega,\Gamma_{A})^{*} \), respectively, that annihiliate \( \mathcal{N}_{\Gamma_{A}} \).
Moreover, there are projections \( P_{\mathcal{N}} \), \( P_{\mathcal{N}_{\Gamma_{A}}} \), \( P_{\mathcal{N}_{\Gamma_{A}}^{*}} \) in \( L^{p}(\Omega) \), \( L^{p}(\Gamma_{A}) \), \( W^{1-1/p',p'}_{0}(\partial\Omega, \Gamma_{A})^{*} \) onto \( \mathcal{N} \), \( \mathcal{N}_{\Gamma_{A}}\subset L^{p}(\Gamma_{A}) \), \( \mathcal{N}_{\Gamma_{A}}\subset W^{1-1/p',p'}_{0}(\partial\Omega, \Gamma_{A})^{*} \), respectively, given by
\begin{equation*}
\begin{aligned}
	P_{\mathcal{N}}u &= \sum_{i=1}^{N}\inner{u}{\psi_{i}}_{L^{2}(\Omega)}\psi_{i}, \\
	P_{\mathcal{N}_{\Gamma_{A}}}v &= \sum_{i=1}^{N}\inner{v}{\tilde{\psi}_{i}}_{L^{2}(\Gamma_{A})}\tilde{\psi}_{i}\vert_{\Gamma_{A}}, \\
	P_{\mathcal{N}_{\Gamma_{A}}^{*}}f &= \sum_{i=1}^{N}\inner{f}{\tilde{\psi}_{i}}\tilde{\psi}_{i}\vert_{\Gamma_{A}},
\end{aligned}
\end{equation*}
where \( \psi_{1},\ldots,\psi_{N} \) is an \( L^{2}(\Omega) \)-orthonormal basis of the kernel \( \mathcal{N} \) and \( \tilde{\psi}_{1},\ldots,\tilde{\psi}_{N} \) an \( L^{2}(\Gamma_{A}) \)-orthonormal basis of \( \mathcal{N}_{\Gamma_{A}} \).
\begin{proof}
First note that, for any \( 2<p<\infty \), Lemma \ref{lemma-boundary-regularity} implies that the kernel \( \mathcal{N} \) and its trace space \( \mathcal{N}_{\Gamma_{A}} \) are subspaces of \( L^{p}(\Omega) \) and \( L^{p}(\Gamma_{A}) \), respectively.
Since the sums are finite, the operators are clearly well-defined.
Boundedness of \( P_{\mathcal{N}} \) and \( P_{\mathcal{N}_{\Gamma_{A}}} \) follows from Hölder's inequality and boundedness of \( P_{\mathcal{N}_{\Gamma_{A}}^{*}} \) follows from the boundedness of \( f \).
The fact that \( P_{\mathcal{N}}^{2} = P_{\mathcal{N}}\) follows easily by verifying that \( P_{\mathcal{N}}u = u \) for arbitrary \( u=\sum_{i=1}\alpha_{i}\psi_{i}\in\mathcal{N} \).
Similarly, \( P_{\mathcal{N}_{\Gamma_{A}}}^{2} = P_{\mathcal{N}_{\Gamma_{A}}} \), and \( P_{\mathcal{N}_{\Gamma_{A}}^{*}}^{2} = P_{\mathcal{N}_{\Gamma_{A}}^{*}} \).
The decompositions \eqref{lemma-projection-decompositions} now follow from Theorem 13.2 in Chapter III in \cite{conway19}.
\end{proof}
\end{lemma}

\begin{lemma}\label{lemma-linear-solvability}
Let \( n\geq 2 \), \( 1<p<\infty \), and \( \Omega,\partial\Omega,\Gamma_{A},\Gamma_{I} \) satisfy \ref{assumption-1} and \ref{assumption-2}.
Assume that \( \gamma \) satisfies \ref{assumption-3} and \( q\in L^{\infty}(\Gamma_{I}) \).
Then there exists for any \( F\in W^{1,p'}(\Omega)^{*} \) a solution \( u\in W^{1,p}(\Omega) \) of the weak form equation
\begin{equation*}
	[Lu](\varphi) = \int_{\Omega}\gamma\nabla u\cdot\nabla\varphi\,dx + \int_{\Gamma_{I}}qu\varphi\,ds = F(\varphi)\quad\varphi\in W^{1,p'}(\Omega),
\end{equation*}
if and only if
\begin{equation}\label{eq-compat-2}
	F(\varphi) = 0\quad\forall\varphi\in\operatorname{Ker}(L^{*}).
\end{equation}
Moreover, there is a unique solution with \( u\vert_{\Gamma_{A}}\in \mathcal{N}_{\Gamma_{A}}^{\perp} \) and this solution satisfies
\begin{equation*}
	\norm{u}_{W^{1,p}(\Omega)} \leq C\norm{F}_{W^{1,p'}(\Omega)^{*}}.
\end{equation*}
\begin{proof}
~\newline
{\bfseries Step 1 - Perturbed problem, \( p = 2 \):}
Consider the perturbed problem
\begin{equation*}
	[L_{\lambda}u](\varphi) \coloneqq \int_{\Omega}\gamma\nabla u\cdot\nabla\varphi\,dx + \int_{\Gamma_{I}}(q+\lambda)u\varphi\,ds = F(\varphi)\quad\varphi\in W^{1,p'}(\Omega),
\end{equation*}
where \( \lambda>\norm{q}_{L^{\infty}(\Gamma_{I})} \).
Define \( B_{\lambda}\colon W^{1,2}(\Omega)\times W^{1,2}(\Omega)\to\mathbb{R} \) by \( B_{\lambda}(u,\varphi)\coloneqq [L_{\lambda}u](\varphi) \).
By using Lemma \ref{lemma-friedrich}, the positivity of \( q+\lambda \) and the uniform ellipticity we find that \( B_{\lambda} \) is coercive,
\begin{equation*}
\begin{aligned}
	\norm{u}_{W^{1,2}(\Omega)}^{2} &\leq C_{1}(\norm{u}_{L^{2}(\Gamma_{I})}^{2} +\norm{\nabla u}_{L^{2}(\Omega)}^{2}) \\
	&\leq C_{2}B_{\lambda}(u,u).
\end{aligned}
\end{equation*}
From the trace inequality it follows that \( B_{\lambda} \) is bounded.
Therefore, Lax-Milgram's lemma ensures, for any \( F\in W^{1,2}(\Omega)^{*} \), existence and uniqueness of solution \( u\in W^{1,2}(\Omega) \) of
\begin{equation*}
	B_{\lambda}(u,\varphi) = \inner{F}{\varphi}\quad\forall \varphi\in W^{1,2}(\Omega).
\end{equation*}
~\newline
{\bfseries Step 2 - Perturbed problem, \( p\neq 2 \):}
When \( p>2 \) then \( W^{1,p'}(\Omega)^{*}\subset W^{1,2}(\Omega)^{*} \).
From Step 1 we get a weak solution \( u\in W^{1,2}(\Omega) \), and Lemma \ref{lemma-boundary-regularity} implies that \( u\in W^{1,p}(\Omega) \).
Since \( W^{1,p}(\Omega)\subset W^{1,2}(\Omega) \), uniqueness of solutions in \( W^{1,2}(\Omega) \) implies uniqueness in \( W^{1,p}(\Omega) \).
Hence the perturbed problem is well-posed for \( p>2 \).

When \( 1<p<2 \), we use a standard adjoint argument.
Let \( L_{\lambda,p} \) denote the operator \( L_{\lambda}\colon W^{1,p}(\Omega)\to W^{1,p'}(\Omega)^{*} \) for a fixed \( p \).
Then \( L_{\lambda,p'} \) is invertible for \( p'\geq 2 \), and this implies invertibility of its adjoint \( L_{\lambda,p'}^{*}\colon W^{1,p}(\Omega)\to W^{1,p'}(\Omega)^{*} \).
Since the operator is symmetric, we have \( L_{\lambda,p'}^{*} = L_{\lambda,p} \).
~\newline
{\bfseries Step 3 - Compact reformulation:}
We now seek a solution \( u\in W^{1,p}(\Omega) \) of the equation \( Lu = F \) for \( F\in W^{1,p'}(\Omega)^{*} \) and \( 1<p<\infty \).
Let \( \iota\colon W^{1,p}(\Omega)\to W^{1,p'}(\Omega)^{*} \) denote the operator \( [\iota u](\varphi)\coloneqq \int_{\Gamma_{I}}u\varphi\,ds \).
Then \( \iota \) is compact, by the compactness of the trace \( W^{1,p}(\Omega)\to L^{p}(\Gamma_{I}) \).
By adding and subtracting \( \lambda \iota u \) to the equation and using the invertibility of \( L_{\lambda} \) results in
\begin{equation*}
	F = L_{\lambda}u -\lambda \iota u = L_{\lambda}(\operatorname{Id}-\lambda L_{\lambda}^{-1}\iota)u.
\end{equation*}
Applying \( L_{\lambda}^{-1} \) on both sides show that the equation \( Lu = F \) is equivalent to
\begin{equation*}
	(\operatorname{Id}-\lambda L_{\lambda}^{-1}\iota)u = L_{\lambda}^{-1}(F).
\end{equation*}
Since the operator \( \lambda L_{\lambda}^{-1}\iota \) is compact, the Fredholm alternative is valid and solutions therefore exist if and only if
\begin{equation}\label{eq-compat-1}
	L_{\lambda}^{-1}(F)\in\operatorname{Ker}([\operatorname{Id}-\lambda L_{\lambda}^{-1}\iota]^{*})^{\perp}.
\end{equation}
To see that \eqref{eq-compat-1} is equivalent to \eqref{eq-compat-2}, first note that for \( v\in W^{1,p'}(\Omega) \), \( \varphi\in W^{1,p}(\Omega) \),
\begin{equation*}
\begin{aligned}
	\inner{[\operatorname{Id}-\lambda L_{\lambda}^{-1}\iota]^{*}L_{\lambda}^{*}v}{\varphi} &= \inner{L_{\lambda}^{*}v}{[\operatorname{Id}-\lambda L_{\lambda}^{-1}\iota]\varphi} \\
	&= \inner{v}{[L_{\lambda}-\lambda\iota]\varphi} \\
	&= \inner{v}{L\varphi} \\
	&= \inner{L^{*}v}{\varphi}.
\end{aligned}
\end{equation*}
From this we conclude that \( v\in\operatorname{Ker}(L^{*}) \) if and only if \( L_{\lambda}^{*}v\in\operatorname{Ker}([\operatorname{Id}-\lambda L_{\lambda}^{-1}\iota]^{*}) \).
By using this with the invertibility of \( L_{\lambda}^{*} \) we see that any element in \( \operatorname{Ker}([\operatorname{Id}-\lambda L_{\lambda}^{-1}\iota]^{*}) \) is of the form \( L_{\lambda}^{*}v \) for some \( v\in\operatorname{Ker}(L^{*}) \).
Therefore, \eqref{eq-compat-1} holds if and only if \( \inner{L_{\lambda}^{-1}F}{L_{\lambda}^{*}v} = 0 \) for any \( v\in\operatorname{Ker}(L^{*}) \).
By writing \( \inner{F}{v} = \inner{L_{\lambda}L_{\lambda}^{-1}F}{v} = \inner{L_{\lambda}^{-1}F}{L_{\lambda}^{*}v} \) for \( v\in\operatorname{Ker}(L^{*}) \) we conclude that \eqref{eq-compat-1} is equivalent to \eqref{eq-compat-2}.
~\newline
{\bfseries Step 4 - Uniqueness:}
To establish existence of solutions with trace in \( \mathcal{N}_{\Gamma_{A}}^{\perp} \), let \( u \) be any solution with data \( F \).
Because of decompositions \eqref{lemma-projection-decompositions} there exists functions \( h\in\mathcal{N} \), \( k\in \mathcal{N}_{\Gamma_{A}}^{\perp} \) such that \( u\vert_{\Gamma_{A}} = h\vert_{\Gamma_{A}} + k \).
Then \( u-h \) is another solution and its trace is \( k\in N_{\Gamma_{A}}^{\perp} \).
This proves existence.

To establish uniqueness of solution with trace in \( \mathcal{N}_{\Gamma_{A}}^{\perp} \), suppose that \( u_{1},u_{2} \) are any two solutions with \( u_{i}\vert_{\Gamma_{A}}\in\mathcal{N}_{\Gamma_{A}}^{\perp} \) corresponding to the data \( F \).
Then \( (u_{1}-u_{2})\vert_{\Gamma_{A}}\in\mathcal{N}_{\Gamma_{A}}^{\perp} \).
On the other hand, \( u_{1}-u_{2}\in\mathcal{N} \) so that \( (u_{1}-u_{2})\vert_{\Gamma_{A}}\in\mathcal{N}_{\Gamma_{A}} \) and we necessarily have \( (u_{1}-u_{2})\vert_{\Gamma_{A}} = 0 \).
Then \( u_{1}-u_{2} \) is a weak solution of
\begin{equation*}
\begin{cases}
	-\operatorname{div}(\gamma\nabla(u_{1}-u_{2})) = 0&\text{in }\Omega,\\
	u_{1}-u_{2} = 0&\text{on }\Gamma_{A},\\
	\partial_{\nu,\gamma}(u_{1}-u_{2}) = 0&\text{on }\Gamma_{A},\\
	\partial_{\nu,\gamma}(u_{1}-u_{2}) + q(u_{1}-u_{2}) = 0&\text{on }\Gamma_{I}.
\end{cases}
\end{equation*}
It follows from unique continuation (Lemma \ref{lemma-ucp}) that \( u_{1} = u_{2} \) on \( \overline{\Omega} \).
~\newline
{\bfseries Step 5 - Stability:}
Let \( X\coloneqq\{u\in W^{1,p}(\Omega)\colon u\vert_{\Gamma_{A}}\in\mathcal{N}_{\Gamma_{A}}^{\perp}\} \).
The previous steps in the proof show that the operator \( L\colon X\mapsto W^{1,p'}(\Omega)^{*} \) is bounded and a bijection onto its range.
It follows from the Fredholm compatibility condition that \( \operatorname{Ran}(L) = \operatorname{Ker}(L^{*})^{\perp} \) and the closed range theorem \cite[Section 5 of Chapter VII]{yoshida95} that the range is closed.
The open mapping theorem now ensures the boundedness of \( L^{-1}\colon\operatorname{Ker}(L^{*})^{\perp}\to X \),
\begin{equation*}
	\norm{u}_{W^{1,p}(\Omega)} = \norm{L^{-1}F}_{W^{1,p}(\Omega)} \leq C\norm{F}_{W^{1,p'}(\Omega)^{*}}.\qedhere
\end{equation*}
\end{proof}
\end{lemma}

\begin{theorem}[Runge approximation]\label{theorem-runge-w1p}
Let \( n\geq 2 \), \( p>n \), and \( \Omega,\partial\Omega,\Gamma_{A},\Gamma_{I} \) satisfy \ref{assumption-1} and \ref{assumption-2}.
Assume that \( \gamma \) satisfies \ref{assumption-3} and \( q\in L^{\infty}(\Gamma_{I}) \).
Consider the equation
\begin{equation}\label{eq-runge-w1p-1}
\begin{cases}
    -\operatorname{div}(\gamma\nabla u) = 0&\text{in }\Omega, \\
    \partial_{\nu,\gamma}u = f&\text{on }\Gamma_{A}, \\
    \partial_{\nu,\gamma}u + qu = 0&\text{on }\Gamma_{I},
\end{cases}
\end{equation}
and let the space \( V \) be defined by
\begin{equation*}
\begin{aligned}
	V\coloneqq \{ u\vert_{\Gamma_{I}}\colon &u\in W^{1,p}(\Omega) \text{ solves \eqref{eq-runge-w1p-1} for some } f\in W^{1-1/p',p'}(\partial\Omega)^{*} \\
	&\text{with }\operatorname{supp}(f)\subseteq\Gamma_{A} \}.
\end{aligned}
\end{equation*}
Then the space \( V \) is dense in \( C(\Gamma_{I}) \),
\begin{equation*}
    \overline{V} = C(\Gamma_{I}).
\end{equation*}
\end{theorem}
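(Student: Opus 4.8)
The plan is the standard Runge--approximation argument by duality, built on the linear results of the appendix. Since $V$ is a linear subspace of $C(\Gamma_I)$, by the Hahn--Banach theorem it suffices to show that every bounded linear functional on $C(\Gamma_I)$ annihilating $V$ vanishes. Such a functional is integration against a signed Radon measure $\mu$, which after extension by zero we regard as a measure on $\partial\Omega$ supported in $\overline{\Gamma_I}$; note $\overline{\Gamma_I}\cap\Gamma_A=\emptyset$. The one place the hypothesis $p>n$ is essential is here: the Sobolev embedding $W^{1,p}(\Omega)\hookrightarrow C(\overline\Omega)$ makes the trace map $W^{1,p}(\Omega)\to C(\partial\Omega)$ bounded, so $\mu$ induces a functional $F_\mu\in W^{1,p}(\Omega)^*$ by $F_\mu(\psi)=\int_{\Gamma_I}\psi|_{\Gamma_I}\,d\mu$.

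I would then solve the adjoint problem with data $F_\mu$. Applying Lemma~\ref{lemma-linear-solvability} with the conjugate exponent $p'\in(1,2)$ in place of $p$ --- so the operator is read as $L\colon W^{1,p'}(\Omega)\to W^{1,p}(\Omega)^*$, and $L^*=L$ by symmetry of $\gamma$, with $\operatorname{Ker}(L^*)=\mathcal{N}$ --- there is a solution $\phi\in W^{1,p'}(\Omega)$ of
\begin{equation*}
	\int_{\Omega}\gamma\nabla\phi\cdot\nabla\eta\,dx+\int_{\Gamma_I}q\phi\eta\,ds=\int_{\Gamma_I}\eta|_{\Gamma_I}\,d\mu\qquad\forall\,\eta\in W^{1,p}(\Omega),
\end{equation*}
provided the Fredholm compatibility condition $F_\mu|_{\mathcal{N}}=0$ holds. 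This is exactly where the structure of $V$ enters: every $v\in\mathcal{N}$ solves \eqref{eq-runge-w1p-1} with $f=0$, so $v|_{\Gamma_I}\in V$ and hence $F_\mu(v)=0$ by assumption. I would pick the distinguished solution with $\phi|_{\Gamma_A}\in\mathcal{N}_{\Gamma_A}^{\perp}$.

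Next I would pair the weak formulations. Testing the equation of a solution $u$ of \eqref{eq-runge-w1p-1} with data $f$ against $\eta=\phi\in W^{1,p'}(\Omega)$, testing the equation of $\phi$ against $\eta=u\in W^{1,p}(\Omega)$, and subtracting, the interior and $\Gamma_I$ terms cancel by symmetry of $\gamma$ and leave $\int_{\Gamma_A}f\,\phi|_{\Gamma_A}\,ds=\int_{\Gamma_I}u|_{\Gamma_I}\,d\mu=0$. Since \eqref{eq-runge-w1p-1} is solvable precisely for $f\in L^p(\Gamma_A)$ with $f\in\mathcal{N}_{\Gamma_A}^{\perp}$, the trace $\phi|_{\Gamma_A}\in L^{p'}(\Gamma_A)$ annihilates $\mathcal{N}_{\Gamma_A}^{\perp}$, and the finite-dimensional duality underlying \eqref{eq-boundary-space-decomposition} then forces $\phi|_{\Gamma_A}\in\mathcal{N}_{\Gamma_A}$; together with $\phi|_{\Gamma_A}\in\mathcal{N}_{\Gamma_A}^{\perp}$ this gives $\phi|_{\Gamma_A}=0$. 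Testing the $\phi$-equation against $\eta\in C_c^\infty(\Omega)$ shows $-\operatorname{div}(\gamma\nabla\phi)=0$ in $\Omega$, so Lemma~\ref{lemma-conormal} supplies a conormal derivative; testing against $\eta$ whose boundary trace is supported in $\Gamma_A$, and using that $q$ and $\mu$ live in $\overline{\Gamma_I}$, shows $\partial_{\nu,\gamma}\phi$ is supported outside $\Gamma_A$. Thus $\phi$ meets the hypotheses of Lemma~\ref{lemma-ucp} with $\Gamma=\Gamma_A$, and $\phi\equiv0$ on $\overline\Omega$.

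With $\phi\equiv0$, the $\phi$-equation collapses to $\int_{\Gamma_I}\eta|_{\Gamma_I}\,d\mu=0$ for all $\eta\in W^{1,p}(\Omega)$. Because the trace map $W^{1,p}(\Omega)\to C(\partial\Omega)$ has range containing the restrictions to $\partial\Omega$ of polynomials, which are dense in $C(\partial\Omega)$ by the Stone--Weierstrass theorem, and $\mu$ extends to a measure on $\partial\Omega$, this yields $\mu=0$, proving $\overline V=C(\Gamma_I)$. I expect the main difficulty to be the exponent bookkeeping --- recognizing that $\mu$ naturally lives in $W^{1,p}(\Omega)^*$, hence the adjoint solution in $W^{1,p'}(\Omega)$, and checking the Fredholm compatibility condition --- together with the care (reminiscent of Lemma~\ref{lemma-solution-map}) needed to pin $\phi|_{\Gamma_A}$ down to $0$ rather than merely modulo $\mathcal{N}_{\Gamma_A}$; once those are in place the unique continuation step is immediate.
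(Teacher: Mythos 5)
Your proof is correct and follows the paper's argument essentially step for step: Hahn--Banach reduction to a vanishing Radon measure, solving the adjoint Robin problem with the measure as $\Gamma_I$-data via Lemma~\ref{lemma-linear-solvability}, pairing weak formulations to show the adjoint solution has vanishing Cauchy data on $\Gamma_A$, unique continuation (Lemma~\ref{lemma-ucp}), and density of Sobolev traces in $C(\Gamma_I)$. You do supply one detail the paper leaves tacit --- the explicit verification of the Fredholm compatibility condition $F_\mu|_{\mathcal{N}}=0$, which holds because elements of $\mathcal{N}$ are themselves solutions with $f=0$ and hence lie in $V$ --- but the route is the same.
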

\begin{proof}
By Hahn-Banach, it is enough to show that any Radon measure in \( C(\Gamma_{I})^{*} \) which vanishes on \( V \) must vanish on \( C(\Gamma_{I}) \).
Let \( h \) be such a measure,
\begin{equation}\label{eq-runge-w1p-2}
	\int_{\Gamma_{I}}u\,dh = 0\quad\forall u\in V.
\end{equation}
Since \( p>n \), the trace \( W^{1,p}(\Omega)\to C(\Gamma_{I}) \) is bounded by the Sobolev embedding and \( C(\Gamma_{I})^{*}\subset W^{1,p}(\Omega)^{*} \) so that \( h\in W^{1,p}(\Omega)^{*} \).
Let \( w\in W^{1,p'}(\Omega) \) be the unique weak solution with \( w\vert_{\Gamma_{A}}\in \mathcal{N}_{\Gamma_{A}}^{\perp} \), provided by Lemma \ref{lemma-linear-solvability}, of the adjoint problem,
\begin{equation*}
\begin{cases}
    -\operatorname{div}(\gamma\nabla w) = 0&\text{in }\Omega, \\
    \partial_{\nu,\gamma}w = 0&\text{on }\Gamma_{A}, \\
    \partial_{\nu,\gamma}w + qw = h&\text{on }\Gamma_{I}.
\end{cases}
\end{equation*}
Then \( w \) satisfies the weak form equation
\begin{equation}\label{eq-runge-w1p-3}
	\int_{\Omega}\gamma\nabla w\cdot \nabla \varphi\,dx + \int_{\Gamma_{I}}qw\varphi\,ds = \int_{\Gamma_{I}}\varphi\,dh\quad\forall \varphi\in W^{1,p}(\Omega).
\end{equation}
Since \( u\in V \) is a valid test function, we find, by taking arbitrary \( \varphi=u\in V \), that
\begin{equation}\label{eq-runge-w1p-4}
	\inner{f}{w} = \int_{\Omega}\gamma\nabla u\cdot \nabla w\,dx + \int_{\Gamma_{I}}quw\,ds = \int_{\Gamma_{I}}u\,dh = 0.
\end{equation}
Since \( f \in \mathcal{N}^{\perp} \) is arbitrary, we may take \( f \) to be the functional \( \psi\mapsto\int_{\Gamma_{A}}w\psi\,ds \).
Then \eqref{eq-runge-w1p-4} reduces to \( \norm{w}_{L^{2}(\Gamma_{A})}^{2} = 0 \), so that \( w\vert_{\Gamma_{A}} = 0 \).
Now \( w \) satisfies
\begin{equation*}
\begin{cases}
    -\operatorname{div}(\gamma\nabla w) = 0&\text{in }\Omega, \\
    w = 0&\text{on }\Gamma_{A}, \\
    \partial_{\nu,\gamma}w = 0&\text{on }\Gamma_{A}.
\end{cases}
\end{equation*}
It follows from unique continuation (Lemma \ref{lemma-ucp}) that \( w = 0 \) on \( \overline{\Omega} \) and \eqref{eq-runge-w1p-3} reduces to
\begin{equation*}
	\int_{\Gamma_{I}}\varphi\,dh = 0\quad\forall\varphi\in W^{1,p}(\Omega).
\end{equation*}
Since \( W^{1,p}(\Omega)\subset C(\overline{\Omega}) \) is dense in \( C(\overline{\Omega}) \), it follows by continuity of the functional \( h \) that
\begin{equation*}
	\int_{\Gamma_{I}}\varphi\,dh = 0\quad\forall \varphi\in C(\Gamma_{I}).\qedhere
\end{equation*}
\end{proof}

\printbibliography
\end{document}